\newtheorem{thm}{Theorem}[section]
\newtheorem{definition}[thm]{Definition}
\newtheorem{theorem}[thm]{Theorem}
\newtheorem*{oseledec*}{Oseledec Theorem}
\newtheorem{cor}[thm]{Corollary}
\newtheorem{lemma}[thm]{Lemma}
\newtheorem{prop}[thm]{Proposition}
\def\moverlay{\mathpalette\mov@rlay}
\def\mov@rlay#1#2{\leavevmode\vtop{%
   \baselineskip\z@skip \lineskiplimit-\maxdimen
   \ialign{\hfil$\m@th#1##$\hfil\cr#2\crcr}}}
\newcommand{\charfusion}[3][\mathord]{
    #1{\ifx#1\mathop\vphantom{#2}\fi
        \mathpalette\mov@rlay{#2\cr#3}
      }
    \ifx#1\mathop\expandafter\displaylimits\fi}
\newcommand{\nocontentsline}[3]{}
\newcommand{\tocless}[2]{\bgroup\let\addcontentsline=\nocontentsline#1{#2}\egroup}
\def\Vol{\ensuremath{\mathrm{Vol}}}
\title[Tubular dimension]{Tubular dimension
: Leaf-Wise Asymptotic Local Product Structure, and Entropy and Volume Growth
}
\author{Snir Ben Ovadia}
\newcommand{\Addresses}{{
  \bigskip
  \footnotesize

  S.~Ben Ovadia, \textsc{Department of Mathematics, Pennsylvania State University, State College, Pennsylvania 16801, United States}. \textit{E-mail address}: \texttt{snir.benovadia@psu.edu}
}}
\begin{document}
\maketitle
\begin{abstract}
We introduce the notion of tubular dimension, and give a formula for it. As an application we show that every invariant measure of a $C^{1+\gamma}$ diffeomorphism of a closed Riemannian manifold admits an asymptotic local product structure for conditional measures on intermediate foliations of unstable leaves. As a second application, we prove a bound on the gap between any two consecutive conditional entropies, in the form of volume growth. As a third application, for certain $C^\infty$ maps we compute all conditional entropies for the measure of maximal entropy; And in particular as a consequence, in a follow-up paper we compute the Hausdorff dimension of the equilibrium measure of holomorphic endomorphisms of $\mathbb{C}\mathbb{P}^k$, $k\geq 1$, giving a solution to the Binder-DeMarco conjecture, and answering a question of Forn{\ae}ss and Sibony. 
\end{abstract}

\tableofcontents

\section{Introduction and main results}
\subsection{Motivation}\label{moti}
Given a dynamical system $(M,f)$, invariant probability measures are an important object, which represents the system in its different ``equilibria", where the probability of an event does not change with time. 

In particular, smooth dynamical system (i.e $M$ is a closed Riemannian manifold, and $f\in\mathrm{Diff}^{1+}(M)$) are important as models of physical systems, where we expect the time evolution map to be smooth. Such systems are endowed with a geometric structure, and a natural question is to understand the interplay between the geometry of the space, and ``the geometry of invariant measures". For example, are invariant measures exact dimensional? Can we find singular sub-manifolds on which an invariant measure disintegrates in a non-atomic manner? What is the relationship between Lyapunov exponents, which are a dynamical quantity which measures chaos, and entropy, a measure theoretic measurement of chaos, and the dimension of invariant measures? This question relates to the understanding of the geometry of Bowen balls and balls.

Thanks to Pesin theory \cite{Pesin77} we know that every measure which admits a positive Lyapunov exponent a.e admits a singular smooth sub-manifold a.e, called an unstable leaf. Studying the measure theoretic properties of the conditional measures on unstable leaves has physical importance (see for example the study of SRB and physical measures \cite{YoungSRBsurvey}).

In \cite{LedrappierYoungII}, Ledrappier and Young proved that for any measure with positive entropy, the conditional measures on unstable leaves have a rich structure, which includes exact point-wise dimension. Moreover, conditional measures on unstable leaves can be disintegrated further into ``stronger" unstable leaves, corresponding to larger Lyapunov exponents. Ledrappier and Young gave a complete formula to compute the point-wise dimension of each conditional measure on a strong unstable foliation, where the formula is in terms of Lyapunov exponents and conditional entropy. The notion of conditional entropy is a natural extension of the metric entropy: In \cite{BK} Brin and Katok showed that the entropy of a measure can be computed by the asymptotic exponential decay rate of the measure of Bowen balls; Conditional entropy extends this notion by considering asymptotic exponential decay rate of the conditional measure of Bowen balls. Thus Ledrappier and Young put together in one formula three of the most important quantities of smooth dynamical systems: dimension, entropy, and Lyapunov exponents.

A remarkable aspect of this result is the fact that in particular the conditional measures on unstable foliations are non-atomic in the presence of positive entropy (and similarly for strong unstable foliation and conditional entropy). This implies that the invariant measure is ``localized" on smooth geometric objects.

To illustrate this, consider the strong unstable foliation inside an unstable leaf, on which the conditionals admit a positive dimension- i.e ``many" typical points inside such a leaf, but not full dimension. Now consider any other typical foliation, for example a foliation by planes in the chart of the unstable leaf. Then each plane intersects each strong leaf at finitely many points (typically), and the less probable case is that this intersection happens at a typical point.

Another heuristic of the non-triviality of that fact, is the lack of transverse foliations which correspond to weakly expanding directions in an unstable leaf. Their lacking implies the difficulty to find any foliations transverse to the strong unstable leaves on which the conditionals are non-atomic. 

The proof of Ledrappier and Young involves computing the measure of transverse balls (see Definition \ref{Defs1}), which are balls saturated by leaves of the strong unstable foliation. One can think of transverse balls as ``long" in the direction of the strong expansion, and ``narrow" in the direction of the weak expansion. 
\begin{align*}
	&\text{Q1: What dynamically significant object can we define which is ``long" in the weak direction?}\\
&\text{Q2: How can we compute the measure of such an object?}\\
&\text{Q3: Can we find such an object with a nice geometric description as well?}
\end{align*}
We call such an object a {\em tube} (see Definition \ref{tubes}), referring to its geometric proportions, and to its desired nice geometric description. Questions 1-3 are particularly interesting in light of the lacking of dynamical foliations transverse to the strong unstable leaves. 

The idea behind the notion of tubes, is to find an object such that if the weakly expanding direction were to be integrable, the tube would be almost a pluck of that invariant foliation. However, even in that case, there is no guarantee that the conditional measures on that foliation would be non-atomic. Moreover, even in the case that there is such a foliation, and that the conditional measures on this foliation have non-trivial structure, there is still no guarantee that they would admit an {\em asymptotic local product structure}, as illustrated in Figure 3 below.

\medskip
In \cite{AsymProd}, Pesin, Barreire, and Schmeling showed that hyperbolic measures (i.e no zero Lyapunov exponents, and at least one positive exponent and one negative exponent) are exact dimensional (rather than only their conditionals). They showed further that hyperbolic measures admit asymptotic local product structure, which we explain below.  
\begin{figure}[h]
    \centering
    \subfigure{Fig.1}{\includegraphics[width=0.24\textwidth]{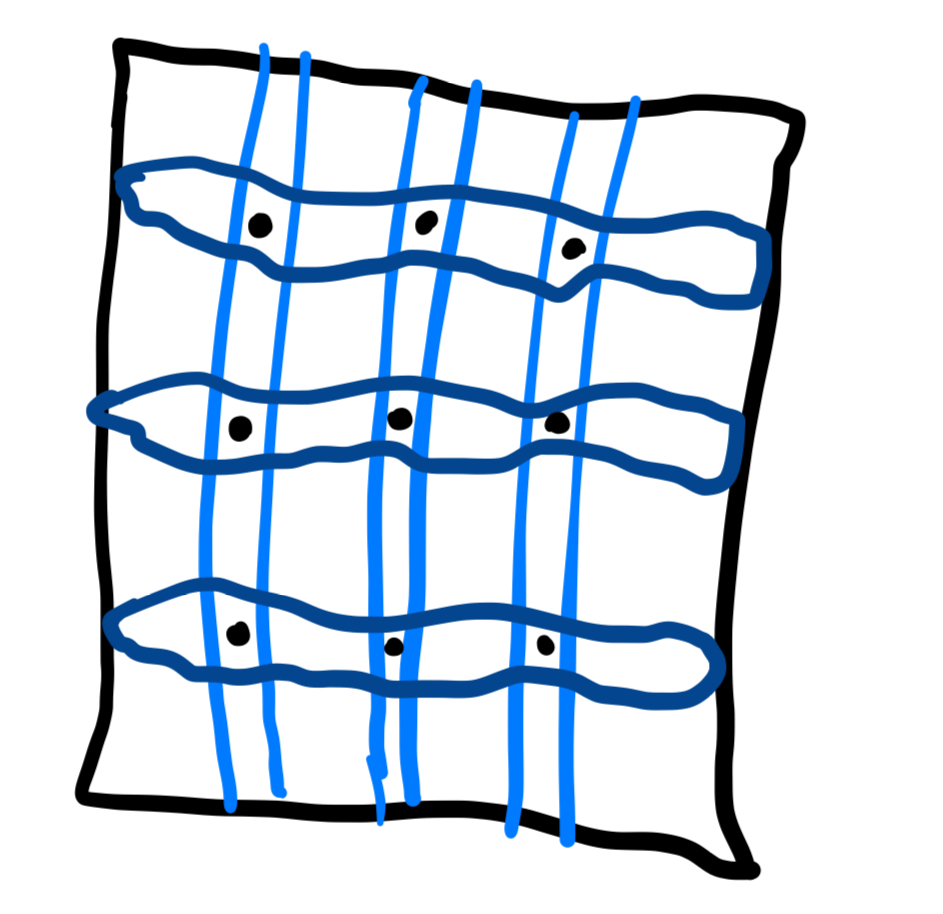}} 
    \subfigure{Fig. 2}{\includegraphics[width=0.24\textwidth]{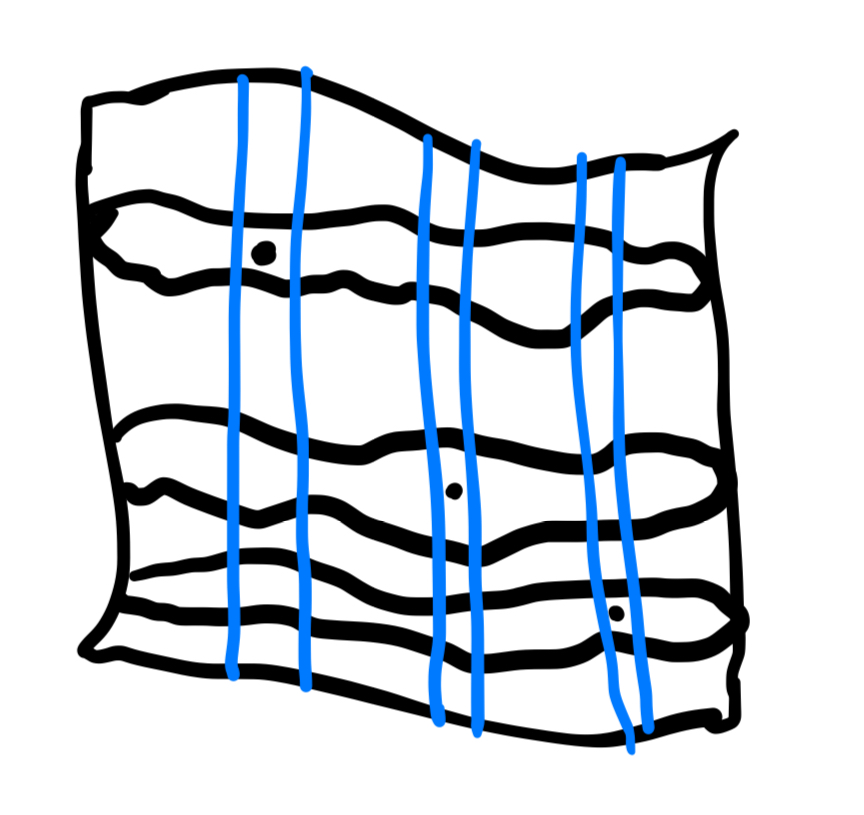}} 
    \subfigure{Fig. 3}{\includegraphics[width=0.24\textwidth]{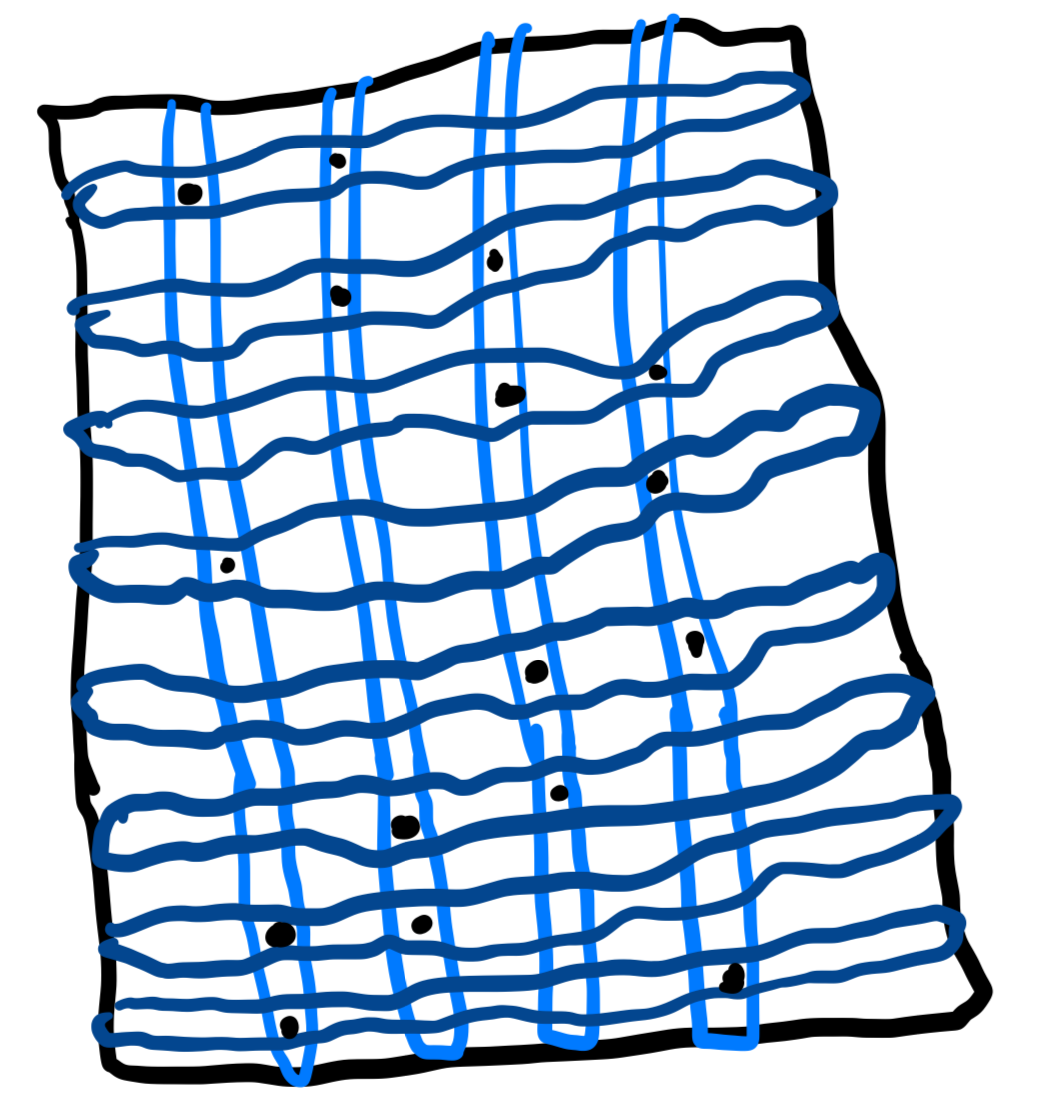}}
\end{figure}

Assume that we have a discrete measure in a chart where each atom has equal mass. We can then cover the full measure set of atoms by 3 different covers: vertical cylinders (called verticals), horizontal cylinders (called horizontals), and intersections of both (called boxes). In the three figures above we illustrate the distribution of a measure with, and without a product structure.

In figure 1 we see nine total atoms, covered by at least nine boxes. Each vertical contains exactly three atoms, and at least three 
verticals are required in order to cover the measure. Similarly for the horizontals. The notion of product structure in this context means that the number of boxes needed to cover the measure is the product of the number verticals needed and horizontals needed. The intersection of every vertical which contains an atom with a horizontal which contains an atom, is a box which contains an atom.

In figure 2, we see the extreme opposite of a product structure: a diagonal measure. Each cylinder contains exactly one atom, and the number of horizontals needed to cover the measure, times the number of verticals needed to cover the measure, is three times bigger than the number of boxes needed. For every horizontal which contains an atom, only one vertical intersects it in an atom. However, diagonal measures are not the only case with no product structure. In figure 3 we see another example with no product structure, but which is not diagonal. 

The notion of asymptotic local product structure is that for almost every point, at small enough scale, the number of boxes needed to cover the measure (or a large measure subset of ``good" points) is exponentially large (in the presence of positive entropy), while the number of boxes is up to a sub-exponential factor the product of the number of verticals and the number of horizontals.

In \cite{AsymProd} the authors prove that every hyperbolic measure has an asymptotic local product structure, where the horizontals are Bowen balls to the past, and the verticals are Bowen balls to the future. This statement gives useful information on the geometry of the localization of hyperbolic measures. Their proof relies on the fact that both families of horizontals and verticals consist of dynamically meaningful objects, and on the fact that Bowen balls to the past are saturated by unstable leaves (resp. stable leaves for Bowen balls to the future).
\begin{align*}
	&\text{Q4: Can we also show an asymptotic local product structure by tubes and transverse balls?}
\end{align*}

\subsection{Main results}
In this paper we answer questions Q1-Q4 from \textsection \ref{moti}, and offer a couple of applications.

\begin{enumerate}
	\item We introduce {\em tubes} (see Definition \ref{tubes}), which are elongated in the weakly expanding direction, with exponential eccentricity. In \textsection \ref{totallyTubular} we show geometric measure theoretic properties of tubes (e.g they form a differentiation basis), and we compute their measure as a function of their eccentricity, which is called the {\em tubular dimension} (see Definition \ref{tEnt}).

\item As a first application to the formula of the tubular dimension, we prove that for every $f$-invariant measure, the conditional measures on unstable leaves admit asymptotic local product structure (see \textsection \ref{AympLocl5}). 

\item As an additional application, in \textsection \ref{entGap} we give a bound to the {\em entropy gap} $h_{i+1}(x)-h_i(x)$ for any two consecutive conditional entropies (see Definition \ref{Defs0}) for any $f$-invariant measure, where the gap is bounded by the {\em volume growth} (see Definition \ref{UniVolGrow}). 

\item In Corollary \ref{fin}, using the bounds on the entropy gap, for $C^\infty$ diffeomorphisms which satisfy a certain super-additive relationship between the volume growth of disks in different dimensions, we compute the conditional entropies of the measure of maximal entropy (see Definition \ref{Defs0.5}).
\end{enumerate}

In particular, in a follow up paper we show that for holomorphic endomorphisms of $\mathbb{C}\mathbb{P}^k$, $k\geq 1$, the assumptions of Corollary \ref{fin} hold, and as a consequence we provide a formula for the Hausdorff dimension of the measure of maximal entropy. This gives an answer to a question of Forn\ae ss and Sibony in their list of fundamental open problems in higher dimensional complex analysis and complex dynamics (\cite[Question~2.17]{FS01}), and proves the Binder-DeMarco conjecture (\cite[Conjecture~1.3]{BinderDeMarcoConj}).

\section{Setup and definitions}

Let $M$ be a closed Riemannian manifold of dimension $d\geq 2$, and let $f\in \mathrm{Diff}^{1+\gamma}(M)$ with $\gamma>0$.

\begin{definition}[Pesin blocks]\label{Defs0.5}
Let $\mu$ be an ergodic $f$-invariant probability measure which admits $u\geq 1$ distinct positive Lyapunov exponents. Set $\underline\chi:=((\chi_1,k_1),\ldots ,(\chi_{u},k_{u}))$, where $k_i$ is the dimension of the Oseledec subspace corresponding to $\chi_i$ for all $1\leq i\leq u$, and $\chi_i>\chi_{i+1}$ for all $1\leq i\leq u-1$.
\begin{enumerate}
	\item Let $0<\tau\leq \tau_{\underline{\chi}}:= \frac{1}{100d}\min\{\chi_u,\chi_{i}-\chi_{i+1}: i\leq u-1\}$, and let $C_{\underline{\chi},\tau}(\cdot)$ be the Lyapunov change of coordinates for points in $\mathrm{LR}_{\underline{\chi}}=\{\text{Lyapunov regular points with an index }\underline\chi\}$ (see \cite{KM}).
	\item Let $\mathrm{PR}_{\underline{\chi}}=\{x\in\mathrm{LR}_{\underline{\chi}}:\limsup_{n\to\pm\infty}\frac{1}{n}\log\|C_{\underline{\chi},\tau}^{-1}(f^n(x))\|=0,\forall 0<\tau\leq \tau_ {\underline{\chi}}\}$, the set of {\em $\underline{\chi}$-Pesin regular} points which carries $\mu$. $\mathrm{PR}:=\bigcup_{\underline\chi}\mathrm{PR}_{\underline\chi}$ is called the set of {\em Pesin regular} points.
\item Given $x\in \mathrm{PR}_{\underline{\chi}}$, let $E_j(x)$ be the Oseledec subspace of $x$ corresponding to $\chi_j$.
\item A {\em Pesin block} $\Lambda^{(\underline{\chi},\tau)}_\ell$ is a subset of $\bigcup_{|\underline{\chi}'-\underline\chi|_\infty< \tau}\mathrm{PR}_{\underline\chi'}$ which is a level set $[q_\tau\geq\frac{1}{\ell}]$ of a measurable function $q_\tau: \bigcup_{|\underline{\chi}'-\underline\chi|_\infty< \tau}\mathrm{PR}_{\underline\chi'}\to (0,1)$ s.t (a) $\frac{q_\tau\circ f}{q_\tau}=e^{\pm \tau}$, (b)  $q_\tau(\cdot)\leq\frac{1}{\|C^{-1}_{\underline\chi,\tau}(\cdot)\|^\frac{d}{\gamma}}$
. Often we omit the subscript $\ell$ when the dependence on $\ell$ is clear from the context.
\item Given $i\leq u$, there exists a {\em strong unstable leaf} $W^i(x)$ tangent to $\oplus_{j\leq i}E_j(x)$, for $\mu$-a.e $x$ (see \cite{RuelleFoliations} for example). The laminations $W^1>\ldots>W^u$ are called {\em the intermediate foliations}.
\end{enumerate}	
\end{definition}

\begin{definition}[Conditional entropy and dimension \cite{LedrappierYoungII}]\label{Defs0} Let $\mu$ be an ergodic $f$-invariant probability measure which admits $u\geq 1$ distinct positive Lyapunov exponents. Let $\xi_i$, $i=1,\ldots ,u$ be the increasing measurable partitions subordinated to the intermediate foliations constructed in \cite[\textsection~9]{LedrappierYoungII}, and let $\{\mu_{\xi_i}(x)\}$ be the system of conditional measures given by the Rokhlin disintegration theorem. 
\begin{enumerate}
		\item Given $\xi_i(x)$, for $y\in \xi_i(x)$ write $d_n^i(y,x):=\max_{0\leq j\leq n} d_{f^j[\xi_i(x)]}(f^j(y),f^j(x))$. 
		\item $B_{\xi_i(x)}(x,n,\epsilon):=\{y\in\xi_i(x):d^i_n(y,x)\leq \epsilon\}$.
		\item $h_i(x):=\lim_{\epsilon\to0}\limsup\frac{-1}{n}\log\mu_{\xi_i(x)}(B_{\xi_i(x)}(x,n,\epsilon))$.
		\item $d_i(x):=\limsup_{r\to0}\frac{\mu_{\xi_i(x)}B_{\xi_i(x)}(x,r)}{\log r}$.
	\end{enumerate}
\end{definition}

\begin{theorem}[\cite{LedrappierYoungI,LedrappierYoungII}] Let $\mu$ be an ergodic $f$-invariant probability measure which admits $u\geq 1$ distinct positive Lyapunov exponents. Then for all $1\leq i\leq u$,
\begin{enumerate}
	\item $h_i(x)= \lim_{\epsilon\to0}\liminf\frac{-1}{n}\log\mu_{\xi_i(x)}(B_{\xi_i(x)}(x,n,\epsilon)) $ is constant a.e,
	\item $d_i(x)=\liminf_{r\to0}\frac{\mu_{\xi_i(x)}B_{\xi_i(x)}(x,r)}{\log r}$ is constant a.e,
	\item for $i\leq u-1$, $d_{i+1}-d_i=\frac{h_{i+1}-h_i}{\chi_{i+1}}$,
	\item $h_u=h_{\mu}(f)$.
\end{enumerate}
	
\end{theorem}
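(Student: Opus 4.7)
The plan is to follow the classical Ledrappier--Young strategy, combining Pesin theory with the Shannon--McMillan--Breiman theorem and a careful geometric analysis of the conditional measures along the flag $W^1>\cdots>W^u$.

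For parts (1)--(2), I would first show that the expressions defining $h_i(x)$ and $d_i(x)$ are $f$-invariant. Because $\xi_i$ is increasing and subordinate to $W^i$, the pushforward $f_*\mu_{\xi_i(x)}$ coincides with $\mu_{\xi_i(fx)}$ up to the unstable Jacobian along $W^i$, and the Bowen ball $B_{\xi_i(x)}(x,n,\epsilon)$ relates in a controlled way to $B_{\xi_i(fx)}(fx,n-1,\epsilon)$; ergodicity then forces both $h_i$ and $d_i$ to be a.e. constant. To upgrade the $\limsup$ in the definitions of $h_i(x)$ and $d_i(x)$ to the $\liminf$, I would restrict to a Pesin block $\Lambda^{(\underline\chi,\tau)}_\ell$, exploit its positive measure to produce a subsequence of good return times via a Borel--Cantelli argument, and conclude that the two one-sided limits agree almost everywhere; this mirrors the proof of the Brin--Katok theorem adapted to conditional measures.

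For part (4), the plan is to appeal to the fact that $\xi_u$, being subordinate to the full unstable foliation of $\mu$, satisfies $\bigvee_{n\ge 0}f^n\xi_u=\mathcal{B}(M)$ modulo $\mu$, using stable-manifold refinement in the Lyapunov-regular set. Applying Rokhlin's formula $h_\mu(f)=H_\mu(f^{-1}\xi_u\mid\xi_u)$ and identifying the conditional information function with $-\tfrac{1}{n}\log\mu_{\xi_u(x)}(B_{\xi_u(x)}(x,n,\epsilon))$ through a conditional Shannon--McMillan--Breiman statement yields $h_u=h_\mu(f)$.

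Part (3) is the heart of the matter and where I expect the main obstacle. The approach is to disintegrate $\mu_{\xi_{i+1}(x)}$ along the sub-lamination by $W^i$ leaves inside $W^{i+1}$, producing a transverse conditional measure $\mu^\perp_{i+1}(x)$ on a local transversal. In a Lyapunov chart, up to bounded distortions that are controlled on a Pesin block, a Bowen ball $B_{\xi_{i+1}(x)}(x,n,\epsilon)$ is approximately the product of $B_{\xi_i(x)}(x,n,\epsilon)$ with a transverse disk of radius $\epsilon e^{-n\chi_{i+1}}$, since $\chi_{i+1}$ is the smallest exponent contributing to $W^{i+1}$ but not to $W^i$. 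This gives the heuristic
\[
-\tfrac{1}{n}\log\mu_{\xi_{i+1}(x)}\!\bigl(B_{\xi_{i+1}(x)}(x,n,\epsilon)\bigr)\;\approx\;-\tfrac{1}{n}\log\mu_{\xi_i(x)}\!\bigl(B_{\xi_i(x)}(x,n,\epsilon)\bigr)\;-\;\tfrac{1}{n}\log\mu^\perp_{i+1}\!\bigl(B^\perp(x,\epsilon e^{-n\chi_{i+1}})\bigr),
\]
and letting $n\to\infty$ and then $\epsilon\to 0$, together with an exact-dimension statement $\dim\mu^\perp_{i+1}=d_{i+1}-d_i$, delivers the claimed identity. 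The main obstacle is rigorously justifying this product-like picture: the sub-foliation $W^i\subset W^{i+1}$ is only measurable, so neither the transverse measure nor the ``rectangular'' shape of Bowen balls is a priori well-behaved, and the holonomy along $W^i$ has only measurable (not continuous) regularity. Controlling distortion uniformly on Pesin blocks, establishing the exact-dimension property of $\mu^\perp_{i+1}$, and matching the $\liminf$/$\limsup$ definitions is what occupies the bulk of the argument; the tubular-dimension framework developed later in the paper is designed precisely to streamline this step by supplying a dynamically meaningful family of objects elongated in the weak-expansion direction whose measure directly encodes the transverse dimension.
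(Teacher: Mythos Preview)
This theorem is not proved in the paper: it is quoted verbatim from \cite{LedrappierYoungI,LedrappierYoungII} as background, with no argument supplied. There is therefore no ``paper's own proof'' to compare your proposal against. Your outline is a reasonable high-level sketch of the original Ledrappier--Young argument (invariance plus ergodicity for (1)--(2), Rokhlin's formula and generation for (4), and the transverse-dimension analysis for (3)), and the obstacles you flag in (3)---measurable holonomy, Lipschitz regularity only on Pesin blocks, matching the two one-sided limits---are indeed where the work lies in \cite{LedrappierYoungII}.

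One correction to your last paragraph: the tubular-dimension framework of \S\ref{totallyTubular} is \emph{not} designed to streamline the proof of this theorem. The paper takes the Ledrappier--Young result as a black box and uses it as input. Tubes and tubular dimension are introduced for different purposes---to prove the leaf-wise asymptotic local product structure (\S\ref{AympLocl5}) and to bound entropy gaps by volume growth (\S\ref{entGap})---not to re-derive the identity $d_{i+1}-d_i=(h_{i+1}-h_i)/\chi_{i+1}$. Indeed, Theorem~\ref{Grail5} explicitly invokes the Ledrappier--Young formula rather than reproving it.
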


\begin{definition}[Transverse structure]\label{Defs1}
Let $\mu$ be an ergodic $f$-invariant Borel probability measure, and assume that $\mu$ admits $u\geq 2$ distinct positive Lyapunov exponents. 
\begin{enumerate}
\item Given $1\leq i\leq u-1$ and a partition element $\xi_{i+1}(x)$, Ledrappier and Young construct in \cite[\textsection~11]{LedrappierYoungII} a {\em transverse metric} $d^{T}_{i+1}(\cdot,\cdot)$ on $\xi_{i+1}(x)$.\footnote{The transverse metric is defined for $\eta_{i+1}/\eta_i$ in \cite[\textsection~11]{LedrappierYoungII}, and extends to $\xi_{i+1}$ through the extension of the Lipschitz continuous projection map on a compact set.} Their construction uses the fact the strong foliation $\xi_i>\xi_{i+1}$ is Lipschitz, which they prove for $f\in \mathrm{Diff}^2(M)$. In \cite{AsymProd}, Barreira, Pesin, and Scmeling extend the proof to our setup, where $f\in \mathrm{Diff}^{1+\gamma}(M)$, and the strong foliation $\xi_i$ is restricted leaves of points in a Pesin block $\Lambda^{(\underline\chi,\tau)}_\ell$, where $\tau>0$ is sufficiently small w.r.t $\chi_u$.
\item A {\em transverse ball} is a ball in the transverse metric $d_{i+1}^T(\cdot,\cdot)$.
\item By \cite[\textsection~8.3]{LedrappierYoungII}, for each $\xi_{i+1}$ and $x\in\Lambda^{(\underline\chi,\tau)}_\ell$ there exists a Lipschitz change of coordinates $\mathcal{O}_x$ s.t all $W^i$-unstable leaves of $\Lambda^{(\underline\chi,\tau)}_\ell$ become planes. $\mathcal{O}_x$ depends only on the leaf $\xi_{i+1}(x)$. Let $\pi_{i+1}$ be the projection onto the $\mathbb{R}^{\mathrm{dim}W^{i+1}-\mathrm{dim}W^i}$-coordinates and $\pi_{i}$ be the projection onto the $\mathbb{R}^{\mathrm{dim}W^i}$-coordinates. 
\end{enumerate}
 \end{definition}
 
 Assume w.l.o.g once and for all throughout this paper that $\mu$ is ergodic and $\chi_u>1$.
\begin{definition}[Pucks, intermediate entropy, and scaling parameters]\label{Defs2}
Given $1\leq i\leq u-1$, fix $\Delta>0$, the {\em window scale}, and
\begin{enumerate}
\item Let $\beta\in (\frac{\Delta}{\chi_i},\frac{\Delta}{\chi_{i+1}})\cap \mathbb{Q}$, and let $\alpha:=(\Delta-\chi_{i+1}\beta)\in (0,\Delta\cdot(1-\frac{\chi_{i+1}}{\chi_i}))$ be the {\em scaling parameters}.
 \item Define the {\em $i+1$-th $(\Delta,\beta,n,\epsilon
)$-puck} at $x$, $$P_{n,\epsilon}(x):=f^{-\lceil n\beta\rceil}[B^T(f^{\lceil n\beta\rceil}(x),e^{-\alpha n-\epsilon n})],$$
for all $n
\in \mathbb{N}$. To ease notation, we refer to $(\Delta,\beta,n,\epsilon
)$-pucks as simply {\em $(n,\epsilon)$-pucks}. 
	\item Set the {\em $i+1$-th intermediate entropy} of $\mu$ at $x$:
$$h_{i+1}^I(x)=h^I_{i+1}(x,\Delta,\beta):=
 \lim_{\epsilon\to0} \limsup_{n\to\infty} \frac{-1}{n}\log\mu_{\xi_{i+1}(x)}(P_{n,\epsilon}(x))
.$$
\item The {\em lower $i+1$-th intermediate entropy} of $\mu$ at $x$ is defined by
$$\underline{h}^I_{i+1}(x)= \underline{h}^I_{i+1}(x,\Delta,\beta):=\lim_{\epsilon\to0} \liminf_{n\to\infty}\frac{-1}{n}\log\mu_{\xi_{i+1}(x)}(P_{n,\epsilon}(x)).$$
\end{enumerate}
\end{definition}

\noindent\textbf{Remarks:} \begin{enumerate}
\item The limit over $\epsilon$ in the definition of the intermediate entropy exists (similarly for the lower intermediate entropy), since it is monotone in $\epsilon$.
\item Note that it is clear from definition that the lower intermediate entropy is always non-negative (and so does the intermediate entropy).
\item The transverse metric on $\xi_{i+1}$ is canonical in the sense that all transverse metric are equivalent due to the Lipschitz property of the $\xi_i$ foliations, hence the intermediate entropy and the lower intermediate entropy do not depend on the metric $d^T_{i+1}(\cdot,\cdot)$.
\item The notion of intermediate entropy (and similarly lower intermediate entropy) extends naturally to non-ergodic measures. By the point-wise ergodic theorem each unstable leaf carries typical points of at most one ergodic component, and so the conditional measures are always computed for an ergodic component. 
\item  The definition extends trivially to the case where $\mu$ admits no positive Lyapunov exponents: The conditional measures are Dirac delta measures, hence $h^I_{i+1}(x)=0$ $\mu$-a.e; Therefore from here onwards we assume that $\mu$ admits a positive Lyapunov exponent a.e.
\item The definition of the $i+1$-th intermediate entropy extends to the case $i=0$ where $\xi_0$ is the partition into points, and so the transverse metric is simply the metric of $\xi_1$.
\item The assumption $\chi^u>1$ does not make us lose generality, since if $\chi_u\in (0,1)$ we may always consider $f^{\lceil\frac{2}{\chi_u}\rceil}\in \mathrm{Diff}^{1+\gamma'}(M)$ which preserves $\mu$ and $\chi_u(\mu, f^{\lceil\frac{2}{\chi_u}\rceil})\geq2>1 $.
\item Note, by the choice of $\alpha$ and $\beta$, when $\epsilon>0$ is small enough so $\chi_i\cdot\beta>\Delta+\epsilon$,
\begin{align}\label{puckWidth}
 	e^{-n\alpha-n\epsilon}\cdot e^{-n\beta\chi_{i+1}}=&e^{-\Delta n-n\epsilon}\gg e^{-n\beta\chi_i}. 	
\end{align}
Therefore the pucks are ``long" in the $\chi_{i+1}$-direction, but are very ``thin" in  other directions. 
\end{enumerate}

\section{Covers, differentiation, and the  intermediate entropy}\label{interEnt}

\subsection{Covering and differentiation lemmas}

In this section we prove some useful geometric measure theoretic properties of pucks, and give a formula to the intermediate entropy. This will be the underlying foundations which allows us to later on compute the tubular dimension (\textsection \ref{totallyTubular}), prove the leaf-wise asymptotic local product structure (\textsection \ref{AympLocl5}), and bound the entropy gaps by volume growth (\textsection \ref{entGap}).

\begin{lemma}[Puck covers]\label{BesiOptimum}
There exists a constant $C_d$ which depends only on the dimension $d$, s.t for a.e $x$, for every $n\in\mathbb N$
, and for every $\epsilon>0$, every set $A\subseteq \xi_{i+1}(x)$ can be covered by pucks centered at $A$, $\mathcal{P}_{n}$ s.t for all $P\in \mathcal{P}_{n}$, $\#\{P'\in \mathcal{P}_n:P\cap P'\neq \varnothing\}\leq C_d$.
\end{lemma}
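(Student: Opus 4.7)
The plan is to reduce the puck covering problem to the classical Besicovitch covering theorem applied to transverse balls at time $\lceil n\beta\rceil$, and then pull the resulting cover back under $f^{-\lceil n\beta\rceil}$.

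First I would observe that for $\mu$-a.e. $x$ the map $f^{\lceil n\beta\rceil}$ restricts to a bijection $\xi_{i+1}(x)\to\xi_{i+1}(f^{\lceil n\beta\rceil}(x))$, and by Definition \ref{Defs2}(2) it carries each puck $P_{n,\epsilon}(y)$ centered at $y\in A$ to the transverse ball $B^T(f^{\lceil n\beta\rceil}(y),e^{-\alpha n-\epsilon n})$ centered at $f^{\lceil n\beta\rceil}(y)$. Since a bijection preserves both coverings and the intersection relation, the assertion reduces to exhibiting a cover $\mathcal{B}_n$ of $A':=f^{\lceil n\beta\rceil}(A)$ by transverse balls of the above common radius, centered at $A'$, with the property that each ball meets at most $C_d$ others.

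Next I would use the Lipschitz change of coordinates $\mathcal{O}_{f^{\lceil n\beta\rceil}(x)}$ from Definition \ref{Defs1}(3), which flattens the strong leaves $\xi_i$ inside the relevant Pesin block to parallel planes. In these coordinates the transverse metric $d^T_{i+1}$ on $\xi_{i+1}(f^{\lceil n\beta\rceil}(x))$ becomes bi-Lipschitz to the Euclidean metric on the quotient coordinate $\mathbb{R}^{\dim W^{i+1}-\dim W^i}$, so each transverse ball is sandwiched between two concentric Euclidean balls of comparable radii. The classical Besicovitch covering theorem applied in this Euclidean space then produces a sub-cover of $\mathcal{O}(A')$ with multiplicity bounded by a universal constant, and pulling back through $\mathcal{O}^{-1}$ and then $f^{-\lceil n\beta\rceil}$ produces the required puck cover.

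The main obstacle is ensuring that the multiplicity constant $C_d$ depends only on the ambient manifold dimension $d$, and not on the bi-Lipschitz constant of $\mathcal{O}_{f^{\lceil n\beta\rceil}(x)}$, the scale $e^{-\alpha n-\epsilon n}$, nor the particular Pesin block containing $f^{\lceil n\beta\rceil}(x)$. I would handle this by invoking the Besicovitch theorem in a doubling-metric form: the doubling constant of the transverse metric is controlled solely by $d':=\dim W^{i+1}-\dim W^i\leq d$ since the chart is bi-Lipschitz to a Euclidean space of that dimension, so the Besicovitch multiplicity can be taken uniform in all of the above parameters and absorbed into a single $C_d$ depending only on $d$.
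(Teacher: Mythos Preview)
Your approach is essentially the paper's: push $A$ forward by $f^{\lceil n\beta\rceil}$, observe that pucks become transverse balls of the common radius $e^{-\alpha n-\epsilon n}$, apply Besicovitch to those balls, and pull back. There is, however, one inaccuracy worth flagging.

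You assert that $f^{\lceil n\beta\rceil}$ restricts to a bijection $\xi_{i+1}(x)\to\xi_{i+1}(f^{\lceil n\beta\rceil}(x))$. This is not correct: since $\xi_{i+1}$ is an \emph{increasing} partition ($f^{-1}\xi_{i+1}\geq\xi_{i+1}$), one only has $\xi_{i+1}(f^m(x))\subseteq f^m[\xi_{i+1}(x)]$, typically strictly. Hence $A':=f^{\lceil n\beta\rceil}[A]$ may spread over several atoms of $\xi_{i+1}$, and there is no single transverse metric (nor a single chart $\mathcal{O}$) defined on all of $A'$ at once. The paper deals with this by first observing that each puck $P_{n,\epsilon}(y)$ is contained in the atom $f^{-\lceil n\beta\rceil}[\xi_{i+1}(f^{\lceil n\beta\rceil}(y))]$, so pucks refine the pullback partition $f^{-\lceil n\beta\rceil}[\xi_{i+1}]$; pucks centered in distinct atoms of this partition are therefore disjoint, and it suffices to run the Besicovitch argument separately on each piece $A\cap f^{-\lceil n\beta\rceil}[\xi_{i+1}(x_0)]$. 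With this adjustment your argument coincides with the paper's.

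A smaller point: your attempt to make $C_d$ independent of the bi-Lipschitz constant of $\mathcal{O}$ by invoking ``the Besicovitch theorem in a doubling-metric form'' is shaky as stated---Besicovitch can fail in general doubling spaces, and the doubling constant of a bi-Lipschitz copy of $\mathbb{R}^{d'}$ does depend on the bi-Lipschitz constant. What actually rescues uniformity here is that all the transverse balls in question have the \emph{same} radius, so a maximal $r$-separated subset of centers already gives a cover with overlap controlled by a packing bound in $\mathbb{R}^{d'}$; the paper does not elaborate on this point either.
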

\begin{proof}
Assume w.l.o.g that $n\beta\in \mathbb{N}$. Notice that pucks refine the partition $f^{-n\beta}[\xi_{i+1}]$, therefore it is enough to cover with a bounded multiplicity $A\cap f^{-n\beta}[\xi_{i+1}(x_0)]$ for $x_0\in A$. Then we can cover by $f^{n\beta}[P_{n,\epsilon}(\cdot)]$ the set $f^{n\beta}[A]\subseteq \xi_{i+1}(f^{n\beta}(x_0))$. Note, that $f^{n\beta}[P_{n,\epsilon}(\cdot)]=B^T(f^{n\beta}(\cdot),e^{-\alpha n-n\epsilon})$ by definition, hence we simply cover by transverse balls, which admit a Besicovitch cover with a multiplicity bounded by $C_d$.
\end{proof}

\begin{lemma}[Puck differentiation]\label{HitDiff}
Let $A\in\mathcal{B}$, then for all $\epsilon>0$, $\mu$-a.e $x\in A$:
$$\lim_{\epsilon\to 0}\limsup_{n\to\infty} \frac{-1}{n}\log\frac{\mu_{\xi_{i+1}(x)}(P_{n,\epsilon}(x)\cap A)}{\mu_{\xi_{i+1}(x)}(P_{n,\epsilon}(x))}=0.$$
\end{lemma}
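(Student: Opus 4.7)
The approach is to derive this lemma as an immediate consequence of a standard Lebesgue-type differentiation theorem applied to the family of pucks on each leaf-wise conditional measure $\mu_{\xi_{i+1}(x)}$. Concretely, I would establish that for $\mu$-a.e $x$ and $\mu_{\xi_{i+1}(x)}$-a.e $y$,
$$\lim_{n\to\infty}\frac{\mu_{\xi_{i+1}(x)}(P_{n,\epsilon}(y)\cap A)}{\mu_{\xi_{i+1}(x)}(P_{n,\epsilon}(y))}=\mathbf{1}_A(y),$$
from which the statement is trivial: for $y\in A$ the ratio is eventually $\geq 1/2$, so $-\tfrac{1}{n}\log(\text{ratio})\leq (\log 2)/n\to 0$, and the outer $\lim_{\epsilon\to 0}$ is of a monotone non-negative quantity.

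Two inputs feed into the differentiation theorem. The first is the bounded-multiplicity Besicovitch covering property with constant $C_d$, which is precisely Lemma \ref{BesiOptimum}. The second is that pucks shrink to their center in the transverse metric: unpacking Definition \ref{Defs2} and using the identity $\alpha+\chi_{i+1}\beta=\Delta$, the diameter of $f^{-\lceil n\beta\rceil}[B^T(f^{\lceil n\beta\rceil}(y),e^{-\alpha n-\epsilon n})]$ in $d^T_{i+1}$ is at most $C(y,\epsilon)e^{-\Delta n}$, where $C(y,\epsilon)$ is controlled by the sub-exponential growth of $\|C_{\underline\chi,\tau}^{-1}(f^{\lceil n\beta\rceil}(y))\|$ along orbits in $\mathrm{PR}_{\underline\chi}$ combined with the Lipschitz control on strong foliations on a fixed Pesin block, cf.\ Definition \ref{Defs1}(3).

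With these two inputs, the proof follows the classical Vitali/Besicovitch pattern. Fix rational $\delta\in(0,1)$ and consider $E_\delta^+:=\{y\in A^c:\limsup_n\mu_{\xi_{i+1}(x)}(P_{n,\epsilon}(y)\cap A)/\mu_{\xi_{i+1}(x)}(P_{n,\epsilon}(y))>\delta\}$ and the symmetric $E_\delta^-\subseteq A$ where the liminf is $<1-\delta$. Given any open neighborhood $U\supseteq E_\delta^+$, cover $E_\delta^+$ by pucks realizing the excess and contained in $U$, with multiplicity $C_d$; summing the puck masses and comparing with the $A$-masses yields $\mu_{\xi_{i+1}(x)}(E_\delta^+)\leq (C_d/\delta)\mu_{\xi_{i+1}(x)}(A\cap U)$. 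Shrinking $U$ and invoking outer regularity forces $\mu_{\xi_{i+1}(x)}(E_\delta^+)=0$; the case of $E_\delta^-$ is symmetric. Intersecting over rational $\delta$ delivers the desired a.e.\ convergence.

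The main technical obstacle is the $y$-dependence of the shrinking constant $C(y,\epsilon)$: the rate at which pucks contract depends on the Pesin block containing the orbit, and one must argue that this dependence does not spoil the differentiation. I would circumvent this by proving the conclusion first on a fixed Pesin block $\Lambda_\ell^{(\underline\chi,\tau)}$, where the relevant constants are uniform, and then assembling the global statement via $\mu(\bigcup_\ell\Lambda_\ell^{(\underline\chi,\tau)})=1$. Beyond this bookkeeping, the argument is a routine instance of the Besicovitch differentiation theorem in this leaf-wise transverse-metric setting.
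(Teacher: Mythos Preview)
Your argument aims for a stronger conclusion than the lemma asserts---full Lebesgue differentiation, with the ratio tending to $1$---and this over-reach is where the gap lies. In your Vitali step you cover $E_\delta^\pm$ by pucks ``realizing the excess and contained in $U$''; but for each $y$ the relevant puck is $P_{n_y,\epsilon}(y)$ with its own $n_y$, so the cover uses pucks of \emph{varying} order $n$. Lemma~\ref{BesiOptimum} only yields bounded multiplicity for a \emph{fixed} $n$: its proof pushes forward by $f^{\lceil n\beta\rceil}$ to turn all pucks into transverse balls of a single radius, and this normalization is lost once $n$ varies. Pucks of different orders live in different atoms of $f^{-\lceil n\beta\rceil}\xi_{i+1}$ and are balls in different pulled-back transverse metrics, so a uniform Besicovitch constant across orders is not available from what you have.

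The paper's route (the deferral to \cite[Lemma~2.3]{NLE}, with Lemma~\ref{BesiOptimum} substituted for the covering lemma there) sidesteps this by proving only the sub-exponential statement, which needs nothing more than the fixed-$n$ cover plus Borel--Cantelli. Fix $\delta>0$ and set
\[
B_n:=\Bigl\{y\in A\cap\xi_{i+1}(x):\ \mu_{\xi_{i+1}(x)}\bigl(P_{n,\epsilon}(y)\cap A\bigr)<e^{-\delta n}\,\mu_{\xi_{i+1}(x)}\bigl(P_{n,\epsilon}(y)\bigr)\Bigr\}.
\]
Cover $B_n$ by $(n,\epsilon)$-pucks centred in $B_n$ with multiplicity $\le C_d$ (Lemma~\ref{BesiOptimum}); since $B_n\cap P\subseteq A\cap P$ for each such puck $P$, summing gives $\mu_{\xi_{i+1}(x)}(B_n)\le C_d\,e^{-\delta n}$. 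This is summable in $n$, so by Borel--Cantelli $\mu_{\xi_{i+1}(x)}$-a.e.\ $y\in A$ lies in only finitely many $B_n$, whence $\limsup_n\frac{-1}{n}\log(\text{ratio})\le\delta$; letting $\delta\downarrow 0$ finishes. Neither the shrinking-diameter analysis nor the Pesin-block exhaustion you outline is needed for this argument.
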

\begin{proof}
	The proof is similar to \cite[Lemma~2.3]{NLE}, where Lemma \ref{BesiOptimum} is used in place of \cite[Lemma~2.2]{NLE}.
\end{proof}

\subsection{The intermediate entropy}

\begin{theorem}\label{puckEnt} Let $1\leq i\leq u-1$, then for $\mu$-a.e $x$,
	$$h^I_{i+1}(x)= \underline{h}^I_{i+1}(x)=\beta h_{i+1}+\alpha\frac{h_{i+1}-h_i}{\chi_{i+1}}.$$
\end{theorem}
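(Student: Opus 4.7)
The plan is to push the puck forward under $f^{\lceil n\beta\rceil}$, where by definition it becomes the transverse ball $B^T(f^{\lceil n\beta\rceil}x, e^{-(\alpha+\epsilon)n})$ inside $\xi_{i+1}(f^{\lceil n\beta\rceil}x)$. Using the Rokhlin transformation rule for conditional measures under an iterate of $f$, together with the containment $\xi_{i+1}(f^{\lceil n\beta\rceil}x)\subseteq f^{\lceil n\beta\rceil}\xi_{i+1}(x)$ (a consequence of $\xi_{i+1}$ being an $f^{-1}$-increasing partition), one obtains the product formula
\[
\mu_{\xi_{i+1}(x)}(P_{n,\epsilon}(x)) = \mu_{\xi_{i+1}(x)}\bigl(f^{-\lceil n\beta\rceil}\xi_{i+1}(f^{\lceil n\beta\rceil}x)\bigr)\cdot \mu_{\xi_{i+1}(f^{\lceil n\beta\rceil}x)}\bigl(B^T(f^{\lceil n\beta\rceil}x, e^{-(\alpha+\epsilon)n})\bigr),
\]
so the task reduces to estimating each factor separately via the Ledrappier--Young theory cited in the excerpt.

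The first factor is the conditional mass of the partition-scale Bowen ball at time $\lceil n\beta\rceil$, hence by the two-sided Brin--Katok-type characterization of $h_{i+1}$ (part (1) of the Ledrappier--Young theorem recalled above) it equals $e^{-n\beta h_{i+1}+o(n)}$ at $\mu$-a.e.\ $x$. For the second factor, transverse balls are $\xi_i$-saturated by construction of the transverse metric (Definition \ref{Defs1}), so their $\mu_{\xi_{i+1}}$-measure coincides with the mass of the ball under the Rokhlin factor measure $\hat\mu_{\xi_{i+1}}$ on the quotient $\xi_{i+1}/\xi_i$; the Ledrappier--Young relation $d_{i+1}-d_i=(h_{i+1}-h_i)/\chi_{i+1}$ applied to the pointwise dimension of $\hat\mu_{\xi_{i+1}}$ in the transverse metric then yields $e^{-\alpha n(h_{i+1}-h_i)/\chi_{i+1}+o(n)}$. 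Multiplying the two factors, taking $-\tfrac{1}{n}\log$, and sending $\epsilon\to 0$ produces the claimed formula. The equality $h^I_{i+1}=\underline{h}^I_{i+1}$ is then inherited from the two-sided versions of the entropy and pointwise dimension statements (parts (1) and (2) of the same theorem), which were formulated in the excerpt precisely to this end.

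The main obstacle will be making the two $o(n)$ error terms uniform in $x$ and $n$. I would handle this by restricting to a Pesin block $\Lambda^{(\underline\chi,\tau)}_\ell$ of large measure, where the Lyapunov change of coordinates $C_{\underline\chi,\tau}$ and the Lipschitz charts $\mathcal{O}_x$ of Definition \ref{Defs1} are uniformly tempered, so that both the entropy estimate and the pointwise-dimension estimate hold with uniform sub-exponential error on a good subset $G\subseteq \Lambda^{(\underline\chi,\tau)}_\ell$ of nearly full $\mu$-measure. The puck differentiation Lemma \ref{HitDiff}, applied with $A=G$, then lets me replace $\mu_{\xi_{i+1}(x)}(P_{n,\epsilon}(x))$ by its restriction to $G$ at the cost of only a sub-exponential factor. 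The $f^{\lceil n\beta\rceil}$-invariance of $\mu$ combined with the ergodic theorem ensures that $f^{\lceil n\beta\rceil}x\in G$ along a positive- (and asymptotically full-) density subset of $n$, which is exactly what is needed to run both factor estimates simultaneously and obtain the matching $\limsup$ and $\liminf$ computations.
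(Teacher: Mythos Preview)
Your decomposition is exactly the paper's: write $\mu_{\xi_{i+1}(x)}(P_{n,\epsilon}(x))$ as a telescoping product of the information function $I_{i+1}$ along the orbit (the paper does this explicitly and invokes the SMB identity $\int I_{i+1}\,d\mu=h_{i+1}$ from \cite[\textsection 9]{LedrappierYoungII}; this is the content of your first factor, though it is the SMB/partition statement rather than the Bowen-ball Brin--Katok formulation you cite as part~(1)) times the transverse-ball measure $\mu_{\xi_{i+1}(f^{n\beta}x)}\bigl(B^T(f^{n\beta}x,e^{-(\alpha+\epsilon)n})\bigr)$, controlled by the transverse dimension $(h_{i+1}-h_i)/\chi_{i+1}$ from \cite[\textsection 11.4]{LedrappierYoungII}.

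Two small points. The puck differentiation Lemma~\ref{HitDiff} is neither needed nor used in the paper's argument: the obstruction is not that the puck must be restricted to a good set, but that the \emph{base point} $f^{n\beta}x$ must lie in the good set $\Omega_\delta$ where the transverse-dimension asymptotic holds uniformly. And ``full density of good times'' alone does not deliver the two-sided bound at \emph{every} large $n$; the paper's device is to locate, for each large $n$, a nearby $j$ with $j-n\beta=O(\delta^2 n)$ and $f^j(x)\in\Omega_\delta$, compare the transverse ball at time $n\beta$ with the pullback of the one at time $j$ by containment, and absorb the extra $(j-n\beta)$-step information sum into the $\delta$-error. That interpolation is the only substantive step your sketch leaves implicit.
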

\begin{proof} Let $n\in \mathbb{N}$ and let $\epsilon>0$, and assume w.l.o.g that $n\beta\in \mathbb{N}$. The idea of the proof is to compute $\mu_{\xi_{i+1}(f^{n\beta}(x))}(B^T(f^{n\beta}(x), e^{-\alpha n-n\epsilon}))$, and then estimate the change of conditional measure when pulling backwards by $f^{-n\beta}$.

Assume first that for all $\delta>0$ there exists $\epsilon_\delta>0$ small enough, s.t for all $\epsilon\in (0,\epsilon_\delta)$, there exists $n_{\epsilon}$ s.t $\forall n\geq n_\delta$, 
\begin{equation}\label{tubeEst}
	\mu_{\xi_{i+1}(f^{n\beta}(x))}(B^T(f^{n\beta}(x), e^{-\alpha n-n\epsilon}))=e^{-n\alpha\frac{h_{i+1}-h_i}{\chi_{i+1}}\pm \delta n}.
\end{equation}
Under this assumption, we continue to estimate $\mu_{\xi_{i+1}(x)}(P_{n,\epsilon}(x))$: Write $Q_k:=f^k[P_{n,\epsilon}(x)]$ and $\mu_k:=\mu_{\xi_{i+1}(f^k(x))}$, then
\begin{align}\label{TeleProd}
\mu_{\xi_{i+1}(x)}(P_{n,\epsilon}(x))=&\prod_{k=0}^{n\beta-1}\frac{\mu_k(Q_k)}{\mu_{k+1}(Q_{k+1})}\cdot \mu_{n\beta}(Q_{n\beta}).
\end{align}
Note, $f^{-1}[Q_{k+1}]=Q_k$, then by the invariance of $\mu$ and the uniqueness of conditional measures:
$$\frac{\mu_k(Q_k)}{\mu_{k+1}(Q_{k+1})} =\mu_{k}(f^{-1}[\xi_{i+1}(f^{k+1}(x))])=:e^{-I_{i+1}(x)}.$$
By the point-wise ergodic theorem, for $\mu$-a.e $x$, $$\lim_{n\beta\to\infty}\frac{1}{n\beta}\sum_{k=0}^{n\beta-1}I_{i+1}\circ f^k(x)=\int I_{i+1}d\mu.$$
By \cite[\textsection~9.2, \textsection~9.3]{LedrappierYoungII}, the r.h.s equals $h_{i+1}$, and so we are only left to prove \eqref{tubeEst}.

By \cite[\textsection~11.4]{LedrappierYoungII}, for $\mu$-a.e $y$, $$\lim_{n\to\infty}\frac{-1}{n}\log\mu_{\xi_{i+1}(y)}(B^T(y,e^{-\alpha n-n\epsilon}))=\frac{h_{i+1}-h_i}{\chi_{i+1}}(\alpha+\epsilon).$$

Let $\delta>0$ small, and let $n_\delta'\in \mathbb{N}$ large enough so $\mu(\Omega_\delta)\geq 1-\delta^3$ where
\begin{align*}
	\Omega_\delta:=\Big\{y: \forall n\geq \delta^2n_\delta',& \mu_{\xi_{i+1}(y)}(B^T(y,e^{-\alpha n-n\epsilon}))= e^{-n\frac{h_{i+1}-h_i}{\chi_{i+1}}(\alpha+\epsilon)\pm \delta^2 n}, \\
	&\frac{1}{n}\sum_{\ell=0}^{n-1}I_{i+1}(f^{-\ell}(y))=h_{i+1}\pm \delta^2\Big\}.
\end{align*}
By the point-wise ergodic theorem, for $\mu$-a.e $x$ there exists $n_\delta\geq n_\delta'$ s.t $\forall n\geq n_\delta$, $\exists j\in [n(1+\delta^2),n(1+2\delta^2)]$ s.t $f^j(x)\in \Omega_\delta$, thus 
\begin{align*}
	\mu_{n\beta}(Q_{n\beta})\geq &\mu_n(f^{-(j-n)}[B^T(f^{j}(x),e^{-\alpha n-\epsilon n})])\\
	\geq &e^{-n\frac{h_{i+1}-h_i}{\chi_{i+1}}(\alpha+\epsilon)\pm \delta^2 n} \cdot \exp\left(-(j-n)\frac{1}{j-n}\sum_{\ell=n}^{j-1}I_{i+1}(f^{-\ell})(f^j(x))\right).
\end{align*}
Since $2\delta^2n\geq j-n\geq \delta^2 n$, by the definition of $\Omega_\delta$, for all $\delta>0$ small enough, we get $\mu_{n\beta}(Q_{n\beta})\geq e^{-n\alpha\frac{h_{i+1}-h_i}{\chi_{i+1}}- \delta n} $. The upper bound is shown similarly.
\end{proof}

\noindent\textbf{Remark:} Note, the formula for the intermediate entropy can be also written as $h^I_{i+1}(x)=\beta h_i+\Delta \frac{h_{i+1}-h_i}{\chi_{i+1}}$. This alludes to the asymptotic local product structure which we prove in \textsection \ref{AympLocl5}, since it implies:
$$\mu_{\xi_{i+1}(x)}(P_{n,\epsilon}(x))\approx \mu_{\xi_i(x)}(P_{n,\epsilon}(x))\cdot \mu_{\xi_{i+1}(x)}(B^T(x,e^{-\Delta n})).$$

\begin{lemma}\label{puckEnt1}
	For $\mu$-a.e $x$,
	$$h^I_{1}(x)= \underline{h}^I_{1}(x)=\frac{h_{1}}{\chi_1}(\alpha+\chi_1\beta).$$
\end{lemma}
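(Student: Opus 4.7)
The plan is to repeat the argument of Theorem \ref{puckEnt} in the degenerate case $i=0$, where by remark~6 following Definition \ref{Defs2} the partition $\xi_0$ is the partition of $M$ into points and the transverse metric on $\xi_1$ is simply its intrinsic leaf metric. With the formal conventions $h_0=0$ and $d_0=0$, the Ledrappier--Young formula $d_1=(h_1-h_0)/\chi_1=h_1/\chi_1$ takes the role that \cite[\textsection~11.4]{LedrappierYoungII} played for $i\geq 1$, and the stated formula $(h_1/\chi_1)(\alpha+\chi_1\beta)=\beta h_1+\alpha(h_1-h_0)/\chi_1$ is just the $i=0$ specialization of Theorem \ref{puckEnt}.

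First I would write the telescoping product
$$\mu_{\xi_1(x)}(P_{n,\epsilon}(x)) = \prod_{k=0}^{\lceil n\beta\rceil-1}\frac{\mu_k(Q_k)}{\mu_{k+1}(Q_{k+1})}\cdot \mu_{\lceil n\beta\rceil}(Q_{\lceil n\beta\rceil}),$$
with $Q_k:=f^k[P_{n,\epsilon}(x)]$ and $\mu_k:=\mu_{\xi_1(f^k(x))}$, exactly as in \eqref{TeleProd}. By $f$-invariance and uniqueness of conditional measures each ratio equals $e^{-I_1\circ f^k(x)}$ where $I_1$ is the information function of $\xi_1$ relative to its one-step refinement; its $\mu$-integral is $h_1$ by \cite[\textsection~9.2,~9.3]{LedrappierYoungII}. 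Birkhoff's ergodic theorem then gives that the product is $e^{-n\beta h_1 \pm \delta n}$ for $\mu$-a.e $x$ and all large $n$.

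For the terminal factor I would estimate $\mu_{\lceil n\beta\rceil}(Q_{\lceil n\beta\rceil})$, which, in this setup, is the conditional measure of the intrinsic leaf ball $B_{\xi_1}(f^{\lceil n\beta\rceil}(x),e^{-\alpha n-\epsilon n})$. By the Ledrappier--Young exact dimension theorem at the first level, $d_1(y)=h_1/\chi_1$ for $\mu$-a.e $y$. Mirroring the second half of the proof of Theorem \ref{puckEnt}, I would fix $\delta>0$, take a set $\Omega_\delta$ of measure $\geq 1-\delta^3$ on which the ball-scaling asymptotic holds uniformly from some scale onward and on which the finite Birkhoff averages of $I_1$ are within $\delta^2$ of $h_1$, and use the ergodic theorem to find $j\in[n(1+\delta^2),n(1+2\delta^2)]$ with $f^j(x)\in\Omega_\delta$. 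Transferring the dimension estimate at $f^j(x)$ back to $f^{\lceil n\beta\rceil}(x)$ by a finite Birkhoff sum of $I_1$ over $[\lceil n\beta\rceil,j)$ then yields $\mu_{\lceil n\beta\rceil}(Q_{\lceil n\beta\rceil})=e^{-n\alpha h_1/\chi_1\pm\delta n}$.

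Combining the two estimates gives $\mu_{\xi_1(x)}(P_{n,\epsilon}(x))=e^{-n(\beta h_1+\alpha h_1/\chi_1)\pm\delta n}$, and since the upper and lower bounds coincide up to the $\pm\delta n$ error with $\delta$ arbitrary, both $h^I_1(x)$ and $\underline{h}^I_1(x)$ equal $\beta h_1+\alpha h_1/\chi_1=(h_1/\chi_1)(\alpha+\chi_1\beta)$. The main technical point to verify is the identification of the relevant ``transverse'' ball on $\xi_1$ with an intrinsic $\xi_1$-ball, and the resulting identification of the measure-scaling exponent with the Ledrappier--Young dimension $d_1=h_1/\chi_1$; once this one input is in place, no step in the argument of Theorem \ref{puckEnt} relies on the presence of a strict sub-foliation, so the proof carries over verbatim.
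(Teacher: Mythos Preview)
Your proof is correct, but the paper takes a much shorter route. Instead of rerunning the telescoping product of Theorem \ref{puckEnt}, the paper exploits the near-conformality of $f$ along $W^1$ leaves: since the differential on $\xi_1$ acts with a single exponent $\chi_1$ (up to $o(n)$ error on Pesin-regular orbits), the puck $P_{n,\epsilon}(x)=f^{-\lceil n\beta\rceil}[B_{\xi_1}(f^{\lceil n\beta\rceil}(x),e^{-\alpha n-\epsilon n})]$ is itself sandwiched between intrinsic leaf balls $B_{\xi_1(x)}(x,e^{-\Delta n\pm\delta n})$, where $\Delta=\alpha+\chi_1\beta$. One then reads off the result directly from the exact dimension $d_1=h_1/\chi_1$, with no telescoping and no $\Omega_\delta$-return argument needed. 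Your approach recovers the same answer by splitting $\Delta$ into the ``pullback'' contribution $\beta h_1$ and the ``terminal ball'' contribution $\alpha h_1/\chi_1$; this has the virtue of making the $i=0$ case literally a specialization of the general mechanism, but it is more work than the one-line geometric observation the paper uses.
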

\begin{proof}
The definition of pucks indeed extends to the case of $i=0$ (see the sixth itemized remark after Definition \ref{Defs2}): $P_{n,\epsilon}(x)=f^{-n\beta}[B_{\xi_1(x)}(f^{n\beta}(x),e^{-n\alpha-n\epsilon})]$. Therefore, for all $\delta>0$, for all $\epsilon>0$ small enough, for $\mu$-a.e $x$, for all $n$ large enough so $n\beta\in \mathbb{N}$,
 $$B_{\xi_1(x)}(x,e^{-\Delta n-\delta n}) \subseteq P_{n,\epsilon}(x)\subseteq B_{\xi_1(x)}(x,e^{-\Delta n+\delta n}).$$ 
Thus $h^I_1(x)=\Delta \frac{h_1}{\chi_1}=\frac{\alpha+\chi_1\beta}{\chi_1}h_1$ $\mu$-a.e.
\end{proof}

\medskip
\noindent\textbf{Remark:} The formula in Lemma \ref{puckEnt1} coincides with the formula in Theorem \ref{puckEnt} when substituting $i=0$ and $h_0=0$.

\section{Tubes and tubular dimension}\label{totallyTubular}

For $i=2$, the action of differential in the $\xi_1$ leaves is almost conformal- in the sense that around good Pesin points, the differential acts with the maximal Lyapunov exponent, up to a small exponential error. 

However, for $i>2$, this is no longer the case, as the action of the differential acts with different exponents in different Oseledec directions, which breaks the conformality. In turn, this gives the pucks a ``distorted" shape in the $\xi_i$ direction. 

The purpose of this section is to treat this issue, and define a proper object with a nice geometric description, whose measure we can estimate. This is done by saturating the pucks into {\em tubes}. Recall Definition \ref{Defs1}.

\begin{definition}[Tubes]\label{tubes}
Let $1\leq i\leq u-1$.
\begin{enumerate}
	\item We restrict the range of values of the scaling parameter $\beta$ further-
let $$\beta_{i+1}\in \left(\frac{\Delta}{\chi_i},\min\left\{\frac{\Delta}{\chi_{i}\cdot(1-\theta)},\frac{\Delta}{\chi_{i+1}}\right\}\right)\cap\mathbb{Q},$$ where $\theta>0$ is the 
H\"older exponent of the $i+1$-th Oseledec direction on $\Lambda^{(\underline\chi,\tau)}$, whenever $0<\tau<\frac{1}{4}\min_{i\neq j}\{|\chi_i-\chi_j|,|\chi_u|\}$. $\beta_{i+1}$ is called the {\em tubular scale}. Since $\Delta$ is fixed, we write $\beta_{i+1}$ to denote that it lies in the restricted range of values. 
	\item Fix a Pesin block $\Lambda:=\Lambda^{(\underline\chi,\tau)}_\ell$, and let $x\in \Lambda$. Then the $i+1$-th {\em $(n,\epsilon,\Lambda)$-pre-tube} at $x$ is defined by $$\check{T}_{n,\epsilon}(x):=\bigcup\left\{P_{n,\epsilon}(y):y\in B_{\xi_i(x)}(x,e^{-n\chi_i\beta_{i+1}})\cap \Lambda\right\}.$$
Given $\Delta>0$, the $i+1$-th {\em $(\Delta,\beta_{i+1},n,\epsilon)$- tube} at $x$ is defined by
$$T_{n,\epsilon}(x):=B^T(x,e^{-\Delta n})\cap \{y:|\pi_i(y)-\pi_i(x)|\leq e^{-n\chi_i\beta_{i+1}+\sqrt{\epsilon} n}\}.$$
To ease notation we refer to $(\Delta,\beta_{i+1},n,\epsilon)$-tubes as {\em $(n,\epsilon)$-tubes} or {\em tubes}.
\end{enumerate}
\end{definition}

\noindent\textbf{Remark:} Note, $\theta$ is well-defined and depends only on $\underline\chi$, by \cite[Appendix~A]{BrinHolderContFoliations}. In addition, note that 
we have 
$\max\{\chi_{i+1}\beta_{i+1}, \chi_i\beta_{i+1}(1-\theta)\}<\Delta<\chi_i\beta_{i+1}$.

\subsection{Tubular covers and differentiation}

\begin{lemma}[Tubular covers]\label{tCover}
For $\mu$-a.e $x$, for every $A\subseteq \xi_{i+1}(x)$, $A$ can be covered by tubes in a Besicovitch manner, with a multiplicity bounded by $\widetilde{C}_d$, where $\widetilde{C}_d$ is a constant depending on the manifold and the partitions $\xi_i,\xi_{i+1}$.
\end{lemma}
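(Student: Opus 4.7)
The plan is to reduce the statement to a standard Besicovitch covering statement in Euclidean space via the Lipschitz chart $\mathcal{O}_{x_0}$ of the ambient leaf $\xi_{i+1}(x_0)$ (Definition \ref{Defs1}(3)). I fix a Pesin block $\Lambda=\Lambda^{(\underline\chi,\tau)}_\ell$ containing $\mu$-a.e.\ point and the leaf $\xi_{i+1}(x_0)$ on which $A$ sits. Read in the chart $\mathcal{O}_{x_0}$, every strong leaf $W^i(y)\cap\Lambda$ is a plane parallel to the $\pi_i$-factor, and by \cite[\textsection~11]{LedrappierYoungII} (extended in \cite{AsymProd} to our setting) the transverse metric $d^T_{i+1}$ is bilipschitz equivalent to $|\pi_{i+1}(\cdot)-\pi_{i+1}(\cdot)|$, with constants depending only on the manifold and on $\xi_i,\xi_{i+1}$. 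Consequently, a transverse ball $B^T(y,e^{-\Delta n})$ is sandwiched between two $\pi_{i+1}$-slabs of widths $e^{-\Delta n}/L$ and $L\,e^{-\Delta n}$, where $L$ depends only on $\Lambda$.

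Composing this with the second factor in the definition of $T_{n,\epsilon}(y)$, each tube is, up to the same uniform bilipschitz distortion, a rectangular box
\begin{equation*}
R_{n,\epsilon}(y):=\{z:|\pi_{i+1}(z)-\pi_{i+1}(y)|\leq e^{-\Delta n}\}\cap\{z:|\pi_i(z)-\pi_i(y)|\leq e^{-n\chi_i\beta_{i+1}+\sqrt{\epsilon}n}\}
\end{equation*}
of fixed aspect ratio (the scales depending on $n,\epsilon$ but \emph{not} on the center $y$). Now introduce the anisotropically rescaled max-norm metric
\begin{equation*}
\rho_{n,\epsilon}(y,z):=\max\!\left\{e^{\Delta n}|\pi_{i+1}(y)-\pi_{i+1}(z)|,\ e^{n\chi_i\beta_{i+1}-\sqrt{\epsilon}n}|\pi_i(y)-\pi_i(z)|\right\}
\end{equation*}
on $\xi_{i+1}(x_0)$. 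This is a rescaling of an $L^\infty$-norm on $\mathbb{R}^{\dim W^{i+1}}$ pulled back by $\mathcal{O}_{x_0}$, so it is bilipschitz to a Euclidean metric with constant depending only on $\mathcal{O}_{x_0}$, while each box $R_{n,\epsilon}(y)$ is exactly the unit $\rho_{n,\epsilon}$-ball at $y$.

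Applying the standard Besicovitch covering theorem to the family of unit $\rho_{n,\epsilon}$-balls centered at points of $A$ yields a subcover with multiplicity bounded by a constant $C'_d$ depending only on $d=\dim M$. Transporting back to the original tubes introduces only the fixed bilipschitz distortion from $\mathcal{O}_{x_0}$ and from the $d^T_{i+1}$–$\pi_{i+1}$ comparison, which merely multiplies the multiplicity bound by a factor depending on $\Lambda$ and on $\xi_i,\xi_{i+1}$. Absorbing this into a single constant $\widetilde C_d$ gives the claim.

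\textbf{Main obstacle.} The only nontrivial point is the uniform comparability of $d^T_{i+1}$-balls with $\pi_{i+1}$-slabs over all centers $y\in\xi_{i+1}(x_0)\cap\Lambda$; this is exactly the Lipschitz property of the strong foliation $\xi_i\subset\xi_{i+1}$ on a Pesin block, which is available in the $C^{1+\gamma}$ setting via \cite{AsymProd}. Once this is in hand, the fact that all tubes share the same aspect ratio for given $(n,\epsilon)$ makes the reduction to a Euclidean Besicovitch cover essentially automatic.
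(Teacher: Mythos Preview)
Your approach is correct and takes a genuinely different route from the paper's. You reduce to a single Besicovitch covering via an anisotropic rescaling $\rho_{n,\epsilon}$ that turns tubes into sets uniformly comparable to unit boxes. The paper instead proceeds in two stages: it first covers the quotient direction greedily by transverse balls $B^T(\cdot,e^{-\Delta n})$, bounding the multiplicity by a volume ratio on $\xi_{i+1}/\xi_i$; then, within each such transverse ball $B$, it observes that tubes centered in $B$ extend across $B$ in the transverse direction, so covering $A\cap B$ by tubes reduces to a standard Besicovitch cover of $\pi_i[A\cap B]$ by balls in the $\xi_i$-factor, giving the total bound $\widetilde C_d=\widetilde C_{i+1}\cdot C_d$. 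Your one-step reduction is conceptually cleaner and exploits directly that all tubes at fixed $(n,\epsilon)$ share a common scale; the paper's product decomposition makes the ``transverse $\times$ intrinsic'' structure of tubes explicit and does not need the full bilipschitz comparison $d^T_{i+1}\asymp|\pi_{i+1}|$, only a volume bound on the quotient.

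One point to tighten: the ``transport back to the original tubes'' step is not a formal consequence of Besicovitch for the boxes $R_{n,\epsilon}$, since a tube neither contains nor is contained in the corresponding box (only $R_{n,\epsilon}^{1/L}\subseteq T_{n,\epsilon}\subseteq R_{n,\epsilon}^{L}$), so replacing one cover by the other can simultaneously lose coverage and increase multiplicity. The clean fix is to skip the boxes and run the greedy selection directly on the tubes: since each tube contains a $\rho_{n,\epsilon}$-ball of radius $1/L$ the selected centers are $(1/L)$-separated, and since each tube lies in a $\rho_{n,\epsilon}$-ball of radius $L$ a packing count bounds the multiplicity by a constant depending only on $d$ and $L$. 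This is what your argument is really using, given that all tubes have the same $(n,\epsilon)$-scale.
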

\begin{proof}
We first cover $A_i:=\pi_{i+1}^{-1}[A]$ by a specific cover of transverse balls: Take $x_1\in A_i$. Given $\{x_1,\ldots,x_n\}$, take $x_{n+1}\in A\setminus \bigcup_{j\leq n} B^T(x_j, \frac{1}{2}e^{-\Delta n})$. The 
process continues as long as $A_i$ is not covered. However, $\forall j\neq j'$, $B^T(x_j, \frac{1}{6}e^{-\Delta n}) \cap B^T(x_{j'}, \frac{1}{6}e^{-\Delta n}) $, therefore the process stops at a finite time $N$, as the quotient space $\xi_{i+1}/\xi_i$ admits a metric with finite volume. Let $\Vol_T$ be the volume on the quotient space, then the cover $\mathcal{C}_T$ has multiplicity bounded by $\frac{\max \Vol_T(B^T(\cdot,\frac{3}{2} e^{-\Delta n}))}{\min\Vol_T(B^T(\cdot, \frac{1}{6} e^{-\Delta n}))}\leq \frac{\max \Vol_T(B^T(\cdot,\frac{9}{2} e^{-\Delta n}))}{\min\Vol_T(B^T(\cdot, \frac{1}{6} e^{-\Delta n}))}\equiv \widetilde{C}_{i+1}$.

For every $B\in \mathcal{C}_T$, cover $A\cap B$ in a Besicovitch cover by tubes, with multiplicity bounded by $C_d$. Denote this cover by $\mathcal{C}^B$. This is possible since each tube extends across $B$, and it is enough to cover $\pi_i[B\cap A]$ in $\xi_i$ by balls. 

We claim that the cover $\mathcal{C}:=\bigcup\{\mathcal{C}^B:B\in\mathcal{C}_T\}$ has multiplicity bounded by $\widetilde{C}_{i+1}\cdot C_d\equiv \widetilde{C}_d$. For every $B\in \mathcal{C}_T$, $\bigcup \mathcal{C}^B\subseteq \frac{3}{2}B$. However, the number of $B'\in\mathcal{C}^T$ s.t $\frac{3}{2}B\cap \frac{3}{2}B'\neq \varnothing $ is bounded by $\frac{\Vol_T(\frac{9}{2}B)}{\min \Vol(\frac{1}{6}B')}\leq \widetilde{C}_{i+1}$. Inside each $B'$, the cover has multiplicity bounded by $C_d$, and we're done.
\end{proof}

\begin{cor}[Tubular differentiation]\label{tDiff}
	Let $A\in\mathcal{B}$, then for all $\epsilon>0$, $\mu$-a.e $x\in A$:
$$\lim_{\epsilon\to 0}\limsup_{n\to\infty} \frac{-1}{n}\log\frac{\mu_{\xi_{i+1}(x)}(T_{n,\epsilon}(x)\cap A)}{\mu_{\xi_{i+1}(x)}(T_{n,\epsilon}(x))}=0.$$
\end{cor}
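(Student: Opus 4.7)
The plan is to mirror the standard differentiation argument used in Lemma \ref{HitDiff} (and in \cite[Lemma~2.3]{NLE}), but feeding in the tubular Besicovitch cover from Lemma \ref{tCover} in place of the puck cover from Lemma \ref{BesiOptimum}. The measure-zero exceptional set of $x$ will then be obtained through a Borel--Cantelli argument applied to the exponentially rare events where the ratio in question is unusually small.

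First, I would fix rationals $\lambda>0$ and $\epsilon>0$, and for each $n\in\mathbb{N}$ introduce the ``bad set''
$$E_{n,\epsilon,\lambda}:=\Big\{x\in A: \mu_{\xi_{i+1}(x)}(T_{n,\epsilon}(x)\cap A)\leq e^{-\lambda n}\,\mu_{\xi_{i+1}(x)}(T_{n,\epsilon}(x))\Big\}.$$
Next, for each partition element $\xi_{i+1}(x_0)$, I would invoke Lemma \ref{tCover} to cover $A\cap E_{n,\epsilon,\lambda}\cap \xi_{i+1}(x_0)$ by tubes $\{T_{n,\epsilon}(x_j)\}_j$ centered at points of that set with multiplicity at most $\widetilde{C}_d$. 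Since each such tube $T_{n,\epsilon}(x_j)$ satisfies the bound $\mu_{\xi_{i+1}(x_0)}(T_{n,\epsilon}(x_j)\cap A)\leq e^{-\lambda n}\,\mu_{\xi_{i+1}(x_0)}(T_{n,\epsilon}(x_j))$ by definition of $E_{n,\epsilon,\lambda}$, summing over $j$ and using the bounded multiplicity gives
$$\mu_{\xi_{i+1}(x_0)}(A\cap E_{n,\epsilon,\lambda})\leq \widetilde{C}_d\,e^{-\lambda n}.$$

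Integrating this estimate against $\mu$ via the Rokhlin disintegration yields $\mu(A\cap E_{n,\epsilon,\lambda})\leq \widetilde{C}_d\,e^{-\lambda n}$, which is summable in $n$. By Borel--Cantelli, $\mu$-a.e.\ $x\in A$ lies in $E_{n,\epsilon,\lambda}$ for only finitely many $n$, so
$$\limsup_{n\to\infty}\frac{-1}{n}\log \frac{\mu_{\xi_{i+1}(x)}(T_{n,\epsilon}(x)\cap A)}{\mu_{\xi_{i+1}(x)}(T_{n,\epsilon}(x))}\leq \lambda$$
outside a $\mu$-null set $N_{\epsilon,\lambda}$. Finally, I would take the countable union $N:=\bigcup_{\epsilon,\lambda\in \mathbb{Q}_{>0}} N_{\epsilon,\lambda}$, still $\mu$-null, and then let $\lambda\to 0$ along rationals and $\epsilon\to 0$ to conclude the desired vanishing for every $x\in A\setminus N$.

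The only subtle point, rather than a real obstacle, is verifying that the Besicovitch-type cover of Lemma \ref{tCover} can be realized with centers chosen in the prescribed subset $A\cap E_{n,\epsilon,\lambda}$ (not just in $\xi_{i+1}(x_0)$); this is immediate from the construction in Lemma \ref{tCover}, since the Besicovitch selection there picks centers from the set being covered. Everything else is a routine adaptation of the puck version, with $\widetilde{C}_d$ playing the role that $C_d$ played in Lemma \ref{HitDiff}.
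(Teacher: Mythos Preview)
Your proposal is correct and follows exactly the approach indicated in the paper: the paper's proof is a one-line reference to \cite[Lemma~2.3]{NLE} with Lemma~\ref{tCover} substituted for the covering input, and you have simply spelled out that Borel--Cantelli argument in detail, with $\widetilde{C}_d$ replacing $C_d$.
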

\begin{proof}
	The proof is similar to \cite[Lemma~2.3]{NLE}, where Lemma \ref{tCover} is used in place of \cite[Lemma~2.2]{NLE}.
\end{proof}

\subsection{Tubular dimension}

\begin{prop}\label{PreTAndT}
	Let $\epsilon>0$ small, and let $\Lambda=\Lambda^{(\underline\chi,\tau)}$ be a Pesin block with $0<\tau<\epsilon$. Then for all $n$ large enough, for $\mu$-a.e $x\in \Lambda$, 
	$$T_{n,\epsilon}(x) \supseteq \check{T}_{n,\epsilon}(x).$$
\end{prop}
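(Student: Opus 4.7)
The plan is to verify the two defining conditions of $T_{n,\epsilon}(x)$ directly for an arbitrary $z\in\check{T}_{n,\epsilon}(x)$. By definition of the pre-tube there exists $y\in B_{\xi_i(x)}(x,e^{-n\chi_i\beta_{i+1}})\cap \Lambda$ with $z\in P_{n,\epsilon}(y)$, i.e.\ $f^{n\beta_{i+1}}(z)\in B^T(f^{n\beta_{i+1}}(y),e^{-\alpha n-\epsilon n})$. Throughout I would work inside the chart $\mathcal{O}_x$, which is legitimate for all three points because $\mathcal{O}_x$ only depends on $\xi_{i+1}(x)$ and $y,z\in\xi_{i+1}(x)$. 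I would also assume (as in the proof of Lemma~\ref{BesiOptimum}) that $n\beta_{i+1}\in\mathbb{N}$.

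\medskip

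The first condition, $d^{T}_{i+1}(z,x)\leq e^{-\Delta n}$, I would obtain from $d^T(z,x)\leq d^T(z,y)+d^T(y,x)$: the second term vanishes because $y\in\xi_i(x)$ and $d^T_{i+1}$ descends to the quotient $\xi_{i+1}/\xi_i$, while for the first term I would use that the transverse metric parameterizes the $E_{i+1}$-direction, so on $\Lambda$ the pull-back by $f^{-n\beta_{i+1}}$ contracts $d^T_{i+1}$ by a factor $e^{-n\beta_{i+1}\chi_{i+1}+O(\tau n)}$ (Pesin distortion). Applied to the forward transverse distance $e^{-\alpha n-\epsilon n}$, this yields
\[
d^T(z,y)\leq e^{-(\alpha+\beta_{i+1}\chi_{i+1})n-\epsilon n+O(\tau n)}=e^{-\Delta n-\epsilon n+O(\tau n)},
\]
which is at most $e^{-\Delta n}$ for $n$ large provided $\tau<\epsilon$ and $\epsilon$ is small enough that the implied constant in $O(\tau n)$ is absorbed. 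For the second condition, $|\pi_i(z)-\pi_i(x)|\leq e^{-n\chi_i\beta_{i+1}+\sqrt\epsilon n}$, I would again split as $|\pi_i(z)-\pi_i(y)|+|\pi_i(y)-\pi_i(x)|$. Since $y$ lies on the plane $\xi_i(x)$ in the chart at leaf-distance at most $e^{-n\chi_i\beta_{i+1}}$ from $x$, the Lipschitz property of $\mathcal{O}_x$ and of $\pi_i$ gives $|\pi_i(y)-\pi_i(x)|\leq C\,e^{-n\chi_i\beta_{i+1}}$. For $|\pi_i(z)-\pi_i(y)|$ I would observe that at the forward time $f^{n\beta_{i+1}}(z)$ and $f^{n\beta_{i+1}}(y)$ lie in a common $\xi_{i+1}$-leaf of bounded diameter, so their $\pi_i$-difference is $O(1)$; pulling back, the $\pi_i$-direction corresponds to $\bigoplus_{j\leq i}E_j$, on which the least contraction rate under $f^{-n\beta_{i+1}}$ is $e^{-n\beta_{i+1}\chi_i+O(\tau n)}$ (the $E_i$-rate, since $\chi_i$ is smallest). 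This gives $|\pi_i(z)-\pi_i(y)|\leq C\,e^{-n\chi_i\beta_{i+1}+O(\tau n)}$. Summing and using $\tau<\epsilon$ (so $O(\tau n)<\sqrt\epsilon n$ for $\epsilon$ small and $n$ large) yields the desired bound.

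\medskip

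The main obstacle is ensuring that the two Pesin contraction estimates hold uniformly along the whole family of base points $y$ in the leaf-ball, not just at $x$: this requires invoking the Pesin-regularity property $\frac{q_\tau\circ f}{q_\tau}=e^{\pm\tau}$ from Definition~\ref{Defs0.5} to propagate the $e^{O(\tau n)}$ distortion to every $y\in B_{\xi_i(x)}(x,e^{-n\chi_i\beta_{i+1}})\cap\Lambda$. A secondary care-point is to pin down the identification of the chart coordinate $\pi_i$ with the dynamical direction $\bigoplus_{j\leq i}E_j$ for the contraction argument; this uses the Lipschitz property of $\mathcal{O}_x$ together with the fact that $\mathcal{O}_x$ linearizes the $\xi_i$-leaves, and it is here that the restriction $\chi_i\beta_{i+1}(1-\theta)<\Delta$ on the tubular scale is ultimately what guarantees that the Hölder tilting between neighbouring $\xi_i$-planes does not spoil the $\sqrt\epsilon n$ slack.
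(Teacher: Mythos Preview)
Your handling of the transverse-ball condition $d^T_{i+1}(z,x)\leq e^{-\Delta n}$ is fine and matches the paper's (terse) treatment. The gap is in your estimate of $|\pi_i(z)-\pi_i(y)|$.

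Your pull-back argument accounts only for the component of the displacement that lies along $\bigoplus_{j\leq i}E_j$: at forward time the $\xi_i$-leaf has $O(1)$ diameter, $f^{-n\beta_{i+1}}$ contracts this bundle by at least $e^{-n\beta_{i+1}\chi_i+O(\tau n)}$, and you conclude $|\pi_i(z)-\pi_i(y)|\leq C\,e^{-n\chi_i\beta_{i+1}+O(\tau n)}$. But the puck $P_{n,\epsilon}(y)$ also extends a length of order $e^{-\Delta n}$ in the $E_{i+1}(y)$-direction (this is exactly \eqref{puckWidth}), and since $\Delta<\chi_i\beta_{i+1}$ this is the \emph{dominant} scale. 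Because $E_{i+1}(y)$ is in general tilted relative to the chart's $\pi_{i+1}$-axis, this long direction contributes a term of order
\[
e^{-\Delta n}\cdot \sin\sphericalangle\big(E_{i+1}(y),\,E_{i+1}(x)\big)
\]
to $|\pi_i(z)-\pi_i(y)|$, which your main computation simply omits. The paper's proof makes precisely this term the centrepiece: the angle is bounded via H\"older continuity of $E_{i+1}$ on the Pesin block, $\sphericalangle(E_{i+1}(y),E_{i+1}(x))\leq C_\Lambda\, e^{-n\chi_i\beta_{i+1}\theta}$, and then the tubular-scale constraint $\beta_{i+1}<\Delta/(\chi_i(1-\theta))$ is exactly what forces $e^{-\Delta n}\cdot e^{-n\chi_i\beta_{i+1}\theta}\leq e^{-n\chi_i\beta_{i+1}}$.

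You do flag this at the end, but you label it a ``secondary care-point'' when in fact it is the whole content of the proposition and the sole reason the range of $\beta_{i+1}$ is restricted (cf.\ the Remark immediately following the paper's proof). Without this term your argument does not close: an uncontrolled tilt would allow $|\pi_i(z)-\pi_i(y)|$ as large as $e^{-\Delta n}\gg e^{-n\chi_i\beta_{i+1}}$. The fix is to reverse the emphasis: carry out the angle estimate explicitly as the main step, and treat the $\bigoplus_{j\leq i}E_j$-width as the easy additive error.
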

\begin{proof}
	For all $\epsilon>0$, for every $y\in \xi_i(x)\cap \Lambda$, $P_{n,\epsilon}(y)\subseteq B^T(x,e^{-n\chi\Delta })$ for all $n$ large enough w.r.t $\Lambda$. Therefore, we wish to show that $\forall z\in P_{n,\epsilon}(y)$, $|\pi_i(z)-\pi_i(x)|\leq e^{-n\chi_i\beta_{i+1}+\sqrt{\epsilon} n}$.
	
Note, given $y_1,y_2\in B_{\xi_i(x)}(x,e^{-n\beta _{i+1}\chi_i})$, and given $z_1\in P_{n,\epsilon}(y_1)$ and $z_2\in P_{n,\epsilon}(y_2)$, the disposition $|\pi_i(z_1)-\pi_i(z_2)|$ is bounded by:
\begin{align*}
|\pi_i(z_1)-\pi_i(z_2)|\leq 2e^{-n\beta _{i+1}\chi_i}+2\cdot (e^{-n\Delta}\cdot \sin\sphericalangle (E_{i+1}(y_1), E_{i+1}(y_1))+e^{-n\beta _{i+1}\chi_i+n2\epsilon}).
\end{align*}
The angle is bounded by $\sphericalangle (E_{i+1}(y_1), E_{i+1}(y_1))\leq C_\mathrm{H\ddot{o}l}(\Lambda)\cdot e^{-n\beta _{i+1}\chi_i \theta}$. Since $\beta_{i+1}<\frac{\Delta}{\chi_i(1-\theta)}$, we have
$$e^{-n\Delta} \cdot e^{-n\beta _{i+1}\chi_i \theta}\leq e^{-n\beta _{i+1} \chi_i
}.$$
Thus, for all $n$ large enough, $T_{n,\epsilon}(x)\supseteq\check{T}_{n,\epsilon}(x)$.
\end{proof}

\noindent\textbf{Remark:} Proposition \ref{PreTAndT} is the key place where we need to use the choice of parameters $\Delta$, and consequently $\beta_{i+1}$. They are necessary to compensate for the fact that the $E_{i+1}(\cdot)$ distribution is merely H\"older continuous on Pesin blocks and not Lipschitz. The key is to ``shorten" a Bowen ball enough into a puck, so the H\"older regularity is sufficient. The lack of conformality (i.e $\chi_i>\chi_{i+1}$) is what allows us to make sure that the tubes remain still ``elongated" in shape, with a uniform exponential gap on their dimensions.

\begin{definition}[Tubular dimension]\label{tEnt}
The {\em $i+1$-th tubular dimension} at a point $x$ is defined by
$$\rho^T_{i+1}(x)=\rho^T(x,\Delta,\beta_{i+1}):=\lim_{\epsilon\to0}\limsup_{n\to\infty}\frac{-1}{n}\log\mu_{\xi_{i+1}(x)}(T_{n,\epsilon}(x)).$$
The {\em $i+1$-th lower tubular dimension} at a point $x$ is defined by
$$\underline{\rho}^T_{i+1}(x)=\underline{\rho}^T(x, \Delta,\beta_{i+1}):=\lim_{\epsilon\to0}\liminf_{n\to\infty}\frac{-1}{n}\log\mu_{\xi_{i+1}(x)}(T_{n,\epsilon}(x)).$$
\end{definition}

\noindent\textbf{Remark:} To understand the role of the scaling parameters for the tubular dimension, allow us illustrate an example. Assume that $f$ is a linear map with orthogonal eigen-directions. Then inside each $W^2$ leaf, we can consider a tube $T$ around a point $x$, with sides parallel to $E_1$ and $E_2$, with dimensions $e^{-n\chi_1}$ and $e^{-\Delta n}$, respectively. The window parameter $\Delta$ can vary between $\chi_2$ and $\chi_1$,\footnote{In this example $\beta$ is fixed with the value $1$, hence $\frac{\Delta}{\chi_1}<1<\frac{\Delta}{\chi_2}\Rightarrow \chi_2<\Delta<\chi_1$.} yielding a Bowen ball and a square ($\|\cdot\|_\infty$-ball), respectively. For both extreme values, the measure of $T$ is known by either the local entropy estimates, or by the point-wise dimension estimates. The idea behind tubular dimension is to be able to estimate the measure of the tube for the rest of the range of possible values of $\Delta$. In generality, the mere H\"older continuity of the $E_{2}$ direction restricts further the range of possible dimensions whose measure we can estimate. Moreover, when considering higher order intermediate foliations, the action of the differential on the $E_i$ direction, $i>1$, is no longer conformal, and so we need to saturate pucks into tubes.

\begin{theorem}\label{Grail5}
	For every $1\leq i\leq u-1$, for $\mu$-a.e $x$, $$\underline{\rho}^T_{i+1}(x) = \rho^T_{i+1}(x)=\Delta\cdot \frac{h_{i+1}-h_i}{\chi_{i+1}}+\beta _{i+1}\chi_i d_i.$$
\end{theorem}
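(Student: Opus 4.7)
The heuristic is that a tube is essentially a product of a $\xi_i$-ball of radius $e^{-n\beta_{i+1}\chi_i}$ with a transverse ball of radius $e^{-\Delta n}$ in $\xi_{i+1}/\xi_i$. By the Ledrappier--Young pointwise dimension $d_i$ on $\xi_i$-leaves the inner ball has $\mu_{\xi_i}$-measure $\approx e^{-n\beta_{i+1}\chi_i d_i}$, and by the Ledrappier--Young transverse dimension $(h_{i+1}-h_i)/\chi_{i+1}$ the transverse ball has factor measure $\approx e^{-n\Delta(h_{i+1}-h_i)/\chi_{i+1}}$; their product is precisely the claimed tubular dimension. The plan is to make this product picture rigorous via Rokhlin disintegration.

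Disintegrate $\mu_{\xi_{i+1}(x)}$ along the finer partition $\xi_i$ to obtain a transverse factor measure $\nu^T$ on the quotient $\xi_{i+1}(x)/\xi_i$, so that
\begin{equation*}
\mu_{\xi_{i+1}(x)}(T_{n,\epsilon}(x))=\int \mu_{\xi_i(y)}\bigl(T_{n,\epsilon}(x)\cap \xi_i(y)\bigr)\,d\nu^T(y).
\end{equation*}
By the Lipschitz structure of $\xi_i$ from Definition \ref{Defs1}, $\nu^T$ coincides, up to subexponential factors on Pesin blocks, with the Ledrappier--Young transverse measure. Arguing in the spirit of Proposition \ref{PreTAndT}, for $y$ on a Pesin block $\Lambda$ with $d^T_{i+1}(y,x)\leq e^{-\Delta n}$ the cross-section $T_{n,\epsilon}(x)\cap \xi_i(y)$ is sandwiched between two $\xi_i$-balls of radii $e^{-n\beta_{i+1}\chi_i\mp\sqrt{\epsilon}n}$ around the projection of $x$ to $\xi_i(y)$---this is exactly where the constraint $\beta_{i+1}<\Delta/(\chi_i(1-\theta))$ absorbs the H\"older wiggling of the $E_{i+1}$-direction. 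The pointwise $\xi_i$-dimension then controls the inner integrand by $e^{-n\beta_{i+1}\chi_i d_i\pm\epsilon n}$, while $\nu^T(B^T(x,e^{-\Delta n}))=e^{-n\Delta(h_{i+1}-h_i)/\chi_{i+1}\pm\epsilon n}$ by the Ledrappier--Young transverse dimension. Multiplying and taking $-\tfrac{1}{n}\log$ yields matching upper and lower exponential rates, giving both $\rho^T_{i+1}$ and $\underline{\rho}^T_{i+1}$.

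The main obstacle is uniformity of the pointwise estimates across the outer integration variable: the Ledrappier--Young statements hold pointwise for $\mu$-a.e.\ $y$, but the integral demands uniform rates on a $\nu^T$-large subset of $B^T(x,e^{-\Delta n})$, simultaneously as $n\to\infty$. The plan is to intersect with a Pesin block on which Egorov's theorem produces uniform rates, and then to absorb the contribution of the complementary ``bad'' set using the tubular differentiation Corollary \ref{tDiff} applied with the uniform-rate subset as the reference set $A$. The same device delivers the lower bound, since Corollary \ref{tDiff} guarantees that restricting to a positive-measure Pesin subset costs at most a subexponential factor in $\mu_{\xi_{i+1}(x)}(T_{n,\epsilon}(x))$.
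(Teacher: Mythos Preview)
Your disintegration heuristic is the right intuition, and it actually does deliver one half of the theorem cleanly: for the inequality $\underline{\rho}^T_{i+1}\geq \Delta\frac{h_{i+1}-h_i}{\chi_{i+1}}+\beta_{i+1}\chi_i d_i$ (i.e.\ the \emph{upper} bound on $\mu_{\xi_{i+1}(x)}(T_{n,\epsilon}(x))$), your plan works. After passing to $T\cap A$ via Corollary~\ref{tDiff}, any leaf contributing to the integral contains a point $z\in A$, the slice sits inside $B_{\xi_i(y)}(z,Ce^{-n\chi_i\beta_{i+1}+\sqrt{\epsilon}n})$, and the uniform dimension bound at $z$ together with the transverse-ball estimate closes the argument.

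The gap is in the other direction, the \emph{lower} bound on $\mu_{\xi_{i+1}(x)}(T_{n,\epsilon}(x))$ (equivalently $\rho^T_{i+1}\leq$ formula). Here the slice $T_{n,\epsilon}(x)\cap\xi_i(y)$ is a ball centred at the $\pi_i$-holonomy image of $x$ in $\xi_i(y)$, and there is no reason this point is $\mu_{\xi_i(y)}$-typical: the $\xi_i$-holonomies inside $\xi_{i+1}$ are only Lipschitz, not absolutely continuous, so the Ledrappier--Young pointwise dimension at $x$ does not transfer. Your proposed fix runs into a circularity: to get a lower bound on the integral you need $\nu^T$-many leaves to carry a point of $A$ inside the inner half of the slice, but the only handle on $\nu^T(\pi(A\cap T'))$ is $\mu_{\xi_{i+1}(x)}(A\cap T')$, and Corollary~\ref{tDiff} only compares this to $\mu_{\xi_{i+1}(x)}(T')$---the very quantity you are trying to bound from below. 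Egorov alone does not break this loop, because you would be assuming the leaf-wise product structure that Theorem~\ref{Grail5} is meant to \emph{establish}.

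The paper sidesteps this entirely. For the lower bound on the tube measure it never disintegrates; instead it uses Theorem~\ref{puckEnt} as a black box giving $\mu_{\xi_{i+1}(x)}(P_{n,\epsilon}(y))\geq e^{-n\beta_{i+1}h_i-n\Delta(h_{i+1}-h_i)/\chi_{i+1}-\delta n}$ for each $y\in\Omega_\delta$, covers $\Omega_\delta\cap B_{\xi_i(x)}(x,e^{-n\chi_i\beta_{i+1}})$ by such pucks via Lemma~\ref{BesiOptimum}, counts them using the $\mu_{\xi_i(x)}$-dimension estimates (here the balls \emph{are} centred at typical points of the single leaf $\xi_i(x)$, so no holonomy issue arises), and invokes Proposition~\ref{PreTAndT} to fit the resulting pre-tube inside $T_{n,\epsilon}(x)$. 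The point is that the puck measure is computed dynamically through the ergodic theorem in Theorem~\ref{puckEnt}, which absorbs the transverse contribution without ever having to evaluate $\mu_{\xi_i(y)}$ on a non-centred ball. For the matching upper bound the paper again avoids Fubini, using instead a Borel--Cantelli/covering argument: tubes centred on a single $\xi_i$-leaf fill a transverse ball whose measure is known, forcing the claimed rate.
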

\begin{proof}
	We begin by showing $\rho_{i+1}^T\leq \Delta\cdot (h_{i+1}-h_i)+\beta _{i+1}\chi_i d_i$ $\mu$-a.e. By Theorem \ref{puckEnt} (recall also the succeeding remark), for all $\delta>0$, for $\mu$-a.e $x$, there exists $\epsilon_\delta>0$ s.t for all $\epsilon\in (0,\epsilon_\delta)$, for all $n$ large enough w.r.t $\epsilon$, $\mu_{\xi_{i+1}(x)}(P_{n,\epsilon}(x))\geq e^{-n\beta _{i+1} h_i}\cdot e^{-n\Delta\frac{h_{i+1}-h_i}{\chi_{i+1}}}e^{-\delta n}$. Let $\epsilon\in (0,\delta)$, and let $\tau\in (0,\epsilon)$, $\ell\in\mathbb{N}$, and $n_\epsilon$ s.t $\mu(\Omega_\delta)\geq 1-\epsilon$, where
\begin{align*}
\Omega_\delta:=\{x\in \Lambda^{(\underline\chi,\tau)}_\ell:\forall n\geq n_\epsilon,\text{ }&\mu_{\xi_{i+1}(x)}(P_{n,\epsilon}(x))\geq e^{-n\beta _{i+1} h_i}\cdot e^{-n\Delta\frac{h_{i+1}-h_i}{\chi_{i+1}}}e^{-\delta n},\\
&\mu_{\xi_i(x)}(P_{n,\epsilon}(x))= e^{-nh_i\beta _{i+1}\pm \delta n}\}.
\end{align*}

Let $x$ be a $\mu_{\xi_i(x)}$-density point of $\Omega_\delta$, then there exists $n_x\geq n_\epsilon$ s.t for $n\geq  n_x$, $$\frac{\mu_{\xi_i(x)}(B_{\xi_{i}(x)}(x,e^{-\chi_i \beta _{i+1} n})\cap \Omega_\delta)}{\mu_{\xi_i(x)}(B_{\xi_{i}(x)}(x,e^{-\chi_i \beta _{i+1} n}))}\geq 1-\delta.$$
Moreover, for $\mu$-a.e $x\in \Omega_\delta$ exists $n_x'\geq n_x$ s.t $\mu_{\xi_i(x)}(B_{\xi_{i}(x)}(x,e^{-\chi_i \beta _{i+1} n})) \geq e^{-n\chi_i\beta _{i+1} d_i-\delta n}$.

We may cover $\Omega_\delta\cap B_{\xi_{i}(x)}(x,e^{-\chi_i \beta _{i+1} n}) $ by Lemma \ref{BesiOptimum}, and there must be at least \\$\frac{(1-\delta) e^{-n\chi_i\beta _{i+1} d_i-\delta n}}{e^{-nh_i\beta _{i+1} +\delta n}}$-many elements in this cover. Thus, by Proposition \ref{PreTAndT},
\begin{align*}
	\mu_{\xi_{i+1}(x)}(T_{n,\epsilon}(x))\geq& \frac{1}{C_d}\cdot \frac{(1-\delta) e^{-n\chi_i\beta _{i+1} d_i-\delta n}}{e^{-nh_i\beta _{i+1} +\delta n}} \cdot e^{-n\beta _{i+1} h_i}\cdot e^{-n\Delta\frac{h_{i+1}-h_i}{\chi_{i+1}}}e^{-\delta n} \\
	\geq &e^{-n(\Delta\cdot \frac{h_{i+1}-h_i}{\chi_{i+1}}+\beta _{i+1}\chi_i d_i)}e^{-4\delta n}.
\end{align*}
	Since $\delta>0$ was arbitrarily small, this concludes the upper bound of $\rho_{i+1}^T$.

\medskip
We continue to bound from below the lower tubular dimension: Let $\delta\in (0,1)$, and let $h\geq 0$ s.t $\mu([\underline{\rho}_{i+1}^T=\rho\pm \frac{\delta}{3}])>0$. There exists $\epsilon\in (0,\delta^3)$ s.t $\mu(A)>0$ where 
\begin{align*}
	A:=\{x\in \Lambda^{(\underline\chi,\epsilon^2)}:\text{ }&\liminf\frac{-1}{n}\log\mu_{\xi_{i+1}(x)}(T_{n,\epsilon}(x))=\rho\pm\frac{2\delta}{3},\\
	\forall n\geq n_0,\text{ }& \mu_{\xi_{i+1}(x)}(B^T(x,e^{-\Delta n+\delta^2 n}))\leq e^{-\Delta\frac{h_{i+1}-h_i}{\chi_{i+1}}n+\delta n}\\
	& \mu_{\xi_{i}(x)}(B_{\xi_{i}(x)}(x,e^{-n\beta _{i+1}\chi_i-\Delta^2 n}))\leq e^{-\beta _{i+1}\chi_i d_{i}n+\delta n} \}.
\end{align*}

Let $A_n:=\{x\in A: \frac{-1}{n}\log\mu_{\xi_{i+1}(x)}(T_{n,\epsilon}(x))=\rho\pm\delta\}$, then by the Borel-Cantelli lemma, there exist infinitely many $n$ s.t $\mu(A_n)\geq e^{-\epsilon n}$.

Let $x_n$ s.t $\mu_{\xi_{i}(x_n)}(A_n)\geq e^{-\epsilon n}$. Cover $A_n\cap \xi_{i}(x_n)$ by tubes as in Lemma \ref{tCover}. Then the number of tubes is bigger than $e^{-n\epsilon}\cdot e^{\beta _{i+1}\chi_i d_{i}n-\delta n}  $. On the other hand, the union of the tubes is contained in the transverse ball $B^T(x,e^{-\Delta n+\delta^2 n}) $, therefore,
$$e^{-\Delta\frac{h_{i+1}-h_i}{\chi_{i+1}}n+\delta n} \geq e^{-n\epsilon}\cdot \frac{1}{\widetilde{C}_d} e^{\beta _{i+1}\chi_i d_{i}n-\delta n} \cdot e^{-\rho n-\delta n}.$$
Since $\delta>0$ was arbitrary small, $\epsilon<\delta$, and $n$ can be arbitrarily large, we are done.
\end{proof}

\noindent\textbf{Remark:} For the case of $i=0$, one can think of a tube as the collection of pucks centered inside a ball in $\xi_0(x)$. However, $\xi_0(x)=\{x\}$, therefore the tube is simply a puck and the tubular dimension is the intermediate entropy; And in this case we refer to Lemma \ref{puckEnt1}. Indeed, when substituting $h_0=d_0=0$ in the formula in Theorem \ref{Grail5}, we get the formula from Lemma \ref{puckEnt1}.

\section{Leaf-wise asymptotic local product structure}\label{AympLocl5}

The notion of tubes and tubular dimension were designed so we can compute the measure of an object which admits a nice geometric description, and which satisfies the property that when intersected with a transverse ball, the intersection is a ball. This allows us to employ all three estimates simultaneously: the measure of balls (the dimension of the conditional measures), the measure of transverse balls (the transverse dimension), and finally the measure of tubes (tubular dimension). The beauty of these quantities is that they satisfy a multiplicative relationship, which allows us to conclude the asymptotic local product structure of conditional measures. 

\begin{theorem}[Leaf-wise asymptotic local product structure]
For all $1\leq i\leq u-1$, for all $\Delta>0$, for all $\delta>0$, for all $\epsilon>0$ small enough, there exists a set $K_\delta$ s.t $\mu(K_\delta)\geq 1-\delta$ and for $\mu$-a.e $x\in K_\delta$, for all $n
\in\mathbb{N}$ large enough,
\begin{enumerate}
	\item $\frac{\mu _{\xi_{i+1}(x)}(B_{\xi_{i+1}(x)}(x,e^{-n\Delta-n\epsilon})\cap K_\delta)}{\mu _{\xi_{i+1}(x)}(B_{\xi_{i+1}(x)}(x,e^{-n\Delta-n\epsilon}))}\geq 1-\delta$,
	\item there exist a cover of $B_{\xi_{i+1}(x)}(x,e^{-n\Delta-n\epsilon})\cap K_\delta$, $\mathcal{T}_{n,\epsilon}$, by $i+1$-th $(n,\epsilon)$-tubes centered at $K_\delta$, and a cover $\mathcal{B}^T_{n,\epsilon}$ by transvese balls $B^T(\cdot,e^{-n\chi_{i}\beta _{i+1}-n\epsilon})$ centered at $K_\delta$, both with multiplicity bounded by $\widetilde{C}_d$,
	\item set $N^\mathcal{T}_n:=\# \mathcal{T}_{n,\epsilon}$, $N^\mathcal{B}_{n}:=\# \mathcal{B}^T_{n ,\epsilon} $, and $N_n:=\#\{B\in \mathcal{T}_{n,\epsilon}\vee \mathcal{B}^T_{n ,\epsilon}: B\cap K_\delta\neq\varnothing\}$, then
\begin{equation}\label{leafAsympLocProd}
	\frac{1}{n}\left|\log\frac{N_n^\mathcal{T}\cdot N_n^\mathcal{B}}{N_n}\right|\leq\delta.
\end{equation}
\end{enumerate}
\end{theorem}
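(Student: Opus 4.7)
The plan is to build $K_\delta$ as the set of ``triply typical'' points inside a Pesin block $\Lambda^{(\underline\chi,\tau)}_\ell$ (with $\tau<\epsilon$) where three pointwise-dimensional asymptotics hold simultaneously for all sufficiently large $n$: the Ledrappier-Young pointwise dimension $\mu_{\xi_{i+1}(x)}(B_{\xi_{i+1}(x)}(x,r))=r^{d_{i+1}\pm\epsilon}$, the transverse dimension $\mu_{\xi_{i+1}(x)}(B^T(x,r))=r^{(h_{i+1}-h_i)/\chi_{i+1}\pm\epsilon}$, and the tubular-dimension estimate $\mu_{\xi_{i+1}(x)}(T_{n,\epsilon}(x))=e^{-n\rho^T_{i+1}\pm n\epsilon}$ from Theorem~\ref{Grail5}. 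Applying Egorov to the three $\mu$-a.e.\ convergences and taking $\epsilon$ small relative to $\delta$ yields $\mu(K_\delta)\geq 1-\delta$; property (1) is then just Lebesgue differentiation of $\mathbf{1}_{K_\delta}$ along $\xi_{i+1}$-leaves applied to density points of $K_\delta$.

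For property (2), I use Lemma~\ref{tCover} to obtain $\mathcal{T}_{n,\epsilon}$ covering $B_{\xi_{i+1}(x)}(x,e^{-n\Delta-n\epsilon})\cap K_\delta$ with multiplicity $\widetilde{C}_d$, and the Besicovitch covering lemma on the quotient $\xi_{i+1}/\xi_i$ (equipped with $d^T_{i+1}$) applied to the $\pi^T$-projection of the same set to obtain $\mathcal{B}^T_{n,\epsilon}$ by transverse balls centered at $K_\delta$, of bounded multiplicity. Using the differentiation estimates (Corollary~\ref{tDiff} for tubes, and Lebesgue differentiation in the quotient measure on $\xi_{i+1}/\xi_i$ for transverse balls), comparison of covered-set mass with per-element mass gives
\begin{align*}
N_n^\mathcal{T}=e^{n(\rho^T_{i+1}-\Delta d_{i+1})\pm n\eta},\qquad N_n^\mathcal{B}=e^{n(\chi_i\beta_{i+1}-\Delta)(h_{i+1}-h_i)/\chi_{i+1}\pm n\eta},
\end{align*}
where $\eta=\eta(\epsilon,\delta)\to 0$ as $\epsilon,\delta\to 0$.

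The crucial estimate for (3) is a lower bound on $N_n$: every element $T\cap B$ of the refinement has transverse width $\leq e^{-n\chi_i\beta_{i+1}-n\epsilon}$ (from the transverse ball) and $\pi_i$-width $\leq e^{-n\chi_i\beta_{i+1}+n\sqrt\epsilon}$ (from the tube), so it fits inside a $\xi_{i+1}$-Riemannian ball of radius $\lesssim e^{-n\chi_i\beta_{i+1}+n\sqrt\epsilon}$ and, by the pointwise dimension estimate on $K_\delta$, has $\mu_{\xi_{i+1}(x)}$-mass at most $e^{-n\chi_i\beta_{i+1}d_{i+1}+n\eta}$. Since the refinement has multiplicity $\leq\widetilde{C}_d^2$ and covers a set of mass $\geq(1-\delta)e^{-n\Delta d_{i+1}-n\eta}$, summing these upper bounds yields $N_n\geq e^{n(\chi_i\beta_{i+1}-\Delta)d_{i+1}-n\eta}$; the matching upper bound $N_n\leq N_n^\mathcal{T}\cdot N_n^\mathcal{B}$ is automatic since each element is determined by a unique pair $(T,B)$.

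The theorem then follows from the algebraic identity
\[
(\rho^T_{i+1}-\Delta d_{i+1})+(\chi_i\beta_{i+1}-\Delta)\tfrac{h_{i+1}-h_i}{\chi_{i+1}}=(\chi_i\beta_{i+1}-\Delta)d_{i+1},
\]
which follows from the tubular dimension formula $\rho^T_{i+1}=\Delta\frac{h_{i+1}-h_i}{\chi_{i+1}}+\beta_{i+1}\chi_i d_i$ (Theorem~\ref{Grail5}) combined with the Ledrappier-Young relation $d_{i+1}-d_i=\frac{h_{i+1}-h_i}{\chi_{i+1}}$. This shows $\log(N_n^\mathcal{T}\cdot N_n^\mathcal{B})$ and $\log N_n$ agree to order $n\eta$, yielding \eqref{leafAsympLocProd} after taking $\epsilon$ small enough. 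The main obstacle I anticipate is combinatorial bookkeeping: forcing all three pointwise-dimensional asymptotics and the leafwise Lebesgue density of $K_\delta$ to hold simultaneously on a single set at the three distinct scales $e^{-n\Delta}$, $e^{-n\chi_i\beta_{i+1}}$, and $e^{-n\chi_i\beta_{i+1}+n\sqrt\epsilon}$ for every $n$ large, together with ensuring that the transverse-ball cover can in fact be realized with centers in $K_\delta$ — which calls for a Fubini-type transfer between the leafwise conditional measures and the ambient measure in the quotient $\xi_{i+1}/\xi_i$.
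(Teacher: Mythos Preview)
Your approach is essentially the same as the paper's: define $K_\delta$ inside a Pesin block by imposing the ball, transverse-ball, and tube asymptotics uniformly from some $n_\epsilon$ on, obtain the two covers via Lemma~\ref{tCover}, estimate $N_n^{\mathcal T}$ and $N_n^{\mathcal B}$ by comparing mass of the covered set with per-element mass, bound $N_n$ below by observing that each refinement cell meeting $K_\delta$ sits inside a $\xi_{i+1}$-ball of radius $e^{-n\chi_i\beta_{i+1}+\sqrt\epsilon n}$, and close with the algebraic identity coming from $\rho^T_{i+1}=\Delta\frac{h_{i+1}-h_i}{\chi_{i+1}}+\beta_{i+1}\chi_i d_i$ together with $d_{i+1}-d_i=\frac{h_{i+1}-h_i}{\chi_{i+1}}$. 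The obstacle you anticipate is handled in the paper exactly as you suspect: one requires $x$ to be simultaneously a $\mu_{\xi_{i+1}(x)}$-density point of $K_\delta$ and a $\mu_{\xi_{i+1}(x)}\circ\pi_i^{-1}$-density point of $\xi_i[K_\delta]$, which supplies the lower bound on $N_n^{\mathcal B}$ without any further Fubini machinery.
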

Note that the definition of $\beta_{i+1}$ (Definition \ref{tubes}) guarantees that $\Delta=\chi_i\beta _{i+1}\cdot(1-\theta)<\chi_i\beta _{i+1} $ thus the tubes and the transverse balls are much ``thinner" than the dimensions of the ball $B_{\xi_{i+1}(x)}(x,e^{-n\Delta-n\epsilon})$.
\begin{proof}
Let $\delta\in (0,1)$ small, and let $\epsilon\in (0,\delta^8)$ small enough so for every $\epsilon\in (0,\epsilon_\delta)$ there exists $n_\epsilon\in\mathbb{N}$ s.t $\mu(K_\delta)\geq 1-\delta$, where 
\begin{align*}
K_\delta:=\Big\{x\in \Lambda^{(\underline\chi,\epsilon^3)}_{\ell_\delta}: \forall n\geq n_\epsilon,\text{ }
& \mu_{\xi_{i+1}(x)}(B^T(x,e^{-n\chi_{i}\beta _{i+1}-n\epsilon}))=e^{-n((\chi_i\beta _{i+1} +\epsilon)\frac{h_{i+1}-h_i}{\chi_{i+1}}\pm \delta^2)},\\ 
&\mu _{\xi_{i+1}(x)}(B_{\xi_{i+1}(x)}(x,e^{-n\Delta\pm n\epsilon}))=e^{-n(\Delta+\epsilon)d_{i+1}\pm\delta^2n},\\
& \mu_{\xi_{i+1}(x)}(B^T(x,e^{-n\Delta+n\epsilon}))=e^{-n((\Delta-\epsilon)\frac{h_{i+1}-h_i}{\chi_{i+1}}\pm \delta^2)},\\
&\mu_{\xi_{i}(x)}(T_{n,\epsilon}(x))= e^{-n(\rho_{i+1}^T\pm\delta^2)},\\
&\mu_{\xi_{i}(x)}(B_{\xi_{i+1}(x)}(x,e^{-n\chi_i\beta _{i+1} +\sqrt\epsilon n}))= e^{-n(\chi_i\beta _{i+1} d_{i+1}\pm\delta^2)}
\Big\}.
\end{align*}
This is possible by Theorem \ref{Grail5} and by 
\cite
{LedrappierYoungII}.

Let $x$ be a $\mu_{\xi_{i+1}(x)}$-Lebesgue density point of $K_\delta$ and a $\mu_{\xi_{i+1}(x)}\circ \pi_i^{-1}$-Lebesgue density point of $\xi_i[K_\delta]$. Let $n_x\geq n_\epsilon$ s.t for all $n\geq n_x$,
\begin{align*}
\frac{\mu _{\xi_{i+1}(x)}(B_{\xi_{i+1}(x)}(x,e^{-n\Delta-n\epsilon})\cap K_\delta)}{\mu _{\xi_{i+1}(x)}(B_{\xi_{i+1}(x)}(x,e^{-n\Delta-n\epsilon}))}\geq & 1-\delta,\\
\frac{\mu _{\xi_{i+1}(x)}(B^T(x,e^{-n\Delta+n\epsilon})\cap \xi_i[K_\delta])}{\mu _{\xi_{i+1}(x)}(B^T(x,e^{-n\Delta+n\epsilon}))}\geq & 1-\delta.
\end{align*}

By Lemma \ref{tCover}, $K_\delta\cap B_{\xi_{i+1}(x)}(x,e^{-n\Delta-n\epsilon}) $ can be covered by a cover of $(n,\epsilon)$-tubes centered at $K_\delta\cap B_{\xi_{i+1}(x)}(x,e^{-n\Delta-n\epsilon})$, $\mathcal{T}_{n,\epsilon}$, and by a cover of $B^T(\cdot,e^{-n(\chi_i\beta _{i+1} +\epsilon)})$ transverse balls centered at $K_\delta\cap B_{\xi_{i+1}(x)}(x,e^{-n\Delta-n\epsilon})$, $\mathcal{B}^T_{n,\epsilon}$, both with multiplicity bounded by $\widetilde{C}_d$. 

\medskip
In order to bound $N_n^\mathcal{T}$ from above, note that $\bigcup \mathcal{T}_{n,\epsilon}\subseteq B_{\xi_{i+1}(x)}(x,e^{-n\Delta+ n\epsilon})$, and so for all $n$ large enough w.r.t $\delta$,
\begin{equation}\label{NTbound}
	N_n^\mathcal{T}\leq \widetilde{C}_d\frac{e^{-n(\Delta+\epsilon)d_{i+1}+\delta^2n}}{e^{-n(\rho_{i+1}^T-\delta^2)}}\leq e^{n(\rho_{i+1}^T-\Delta d_{i+1}+3\delta^2)}.
\end{equation}
Similarly, since $ \bigcup \mathcal{T}_{n,\epsilon}\supseteq B_{\xi_{i+1}(x)}(x,e^{-n\Delta- n\epsilon})\cap K_\delta $,
\begin{equation*}
	N_n^\mathcal{T}\geq \frac{(1-\delta)e^{-n(\Delta-\epsilon)d_{i+1}-\delta^2n}}{e^{-n(\rho_{i+1}^T+\delta^2)}}\geq e^{n(\rho_{i+1}^T-\Delta d_{i+1}-3\delta^2)}.
\end{equation*}

\medskip
In order to bound $N_n^\mathcal{B}$ from above, note that $\bigcup \mathcal{B}^T_{n,\epsilon}\subseteq B^T(x,e^{-n\Delta+ n\epsilon})$, and so for all $n$ large enough,
\begin{equation}\label{NBbound}
	N_n^\mathcal{B}\leq \widetilde{C}_d\frac{e^{-n((\Delta-\epsilon)\frac{h_{i+1}-h_i}{\chi_{i+1}}+ \delta^2)}}{e^{-n((\chi_i\beta _{i+1} +\epsilon)\frac{h_{i+1}-h_i}{\chi_{i+1}}-\delta^2)}}\leq e^{n((\chi_i\beta _{i+1}-\Delta) \frac{h_{i+1}-h_i}{\chi_{i+1}} +3\delta^2)}.
\end{equation}
Similarly, since $ \bigcup \mathcal{B}^T_{n,\epsilon}\supseteq B^T(x,e^{-n\Delta- n\epsilon})\cap \xi_i[K_\delta] $,
\begin{equation*}
	N_n^\mathcal{B}\geq \frac{(1-\delta) e^{-n((\Delta-\epsilon)\frac{h_{i+1}-h_i}{\chi_{i+1}}-\delta^2)}}{e^{-n((\chi_i\beta _{i+1} +\epsilon)\frac{h_{i+1}-h_i}{\chi_{i+1}}+\delta^2)}}\geq e^{n((\chi_i\beta _{i+1}-\Delta) \frac{h_{i+1}-h_i}{\chi_{i+1}} -3\delta^2)}.
\end{equation*}

\medskip
Finally, notice that $$N_n\leq N_n^\mathcal{T}\cdot N_n^\mathcal{B},$$ since $N_n$ is bounded by the cardinality of the cover which is the refinement of $\mathcal{T}_{n,\epsilon}$ and $\mathcal{B}^T_{n,\epsilon}$. The lower bound on $N_n$ is given by the observation that for every element $B\in\mathcal{T}_{n,\epsilon}\vee\mathcal{B}^T_{n,\epsilon}$ s.t $\exists x_B\in B\cap K_\delta$, $B\subseteq B_{\xi_{i+1}(x)}(x_B,e^{-n\chi_i\beta _{i+1} +\sqrt\epsilon n})$, and so 
\begin{equation*}
	\mu_{\xi_{i+1}(x)}(B)\leq e^{-n(\chi_i\beta _{i+1} d_{i+1}-\delta^2)}.
\end{equation*}
Therefore,
\begin{equation}\label{Nbound}
	N_n\geq \frac{(1-\delta)e^{-n(\Delta-\epsilon)d_{i+1}-\delta^2n}}{e^{-n(\chi_i\beta _{i+1} d_{i+1}-\delta^2)}}\geq e^{n(\chi_i\beta _{i+1} d_{i+1}-\Delta d_{i+1}-3\delta^2)}.
\end{equation}
By Theorem \ref{Grail5}, $\rho_{i+1}^T=\chi_i\beta _{i+1} d_i+\Delta\frac{h_{i+1}-h_i}{\chi_{i+1}}$, therefore by \eqref{NTbound},\eqref{NBbound}, and \eqref{NBbound},
$$N_n\geq e^{-9\delta^2 n}\cdot N_n^\mathcal{T}\cdot N_n^\mathcal{B}.$$
Therefore, for $\delta>0$ small enough, we are done. 
\end{proof}

\section{Entropy gap and volume growth}\label{entGap}

In \textsection \ref{AympLocl5} we gave an application of the tubular dimension which was introduced in \textsection \ref{totallyTubular}. 
In this section we present a second application of the tubular dimension. We bound the difference between any two consecutive conditional entropies $h_{i+1}-h_i$ by the volume growth of disks of dimension corresponding to the Oseledec space of $\chi_{i+1}$, $E_{i+1}$.

 \begin{definition}[Newhouse \cite{NewhouseVolGrowth}]\label{UniVolGrow}
Let $1<r\in (\mathbb{R}\setminus\mathbb{N})\cup \{\infty\}$, and assume $f\in \mathrm{Diff}^r(M)$. Let $1\leq k\leq d$ be an integer, then the {\em $(k,r)$-uniform volume growth} is defined by
	$$\mathcal{V}_k^r(f):=\limsup_{n\to\infty}\frac{1}{n}\log \sup_{\eta}\Vol_{k}(f^n\circ \eta[B_{\mathbb{R}^k}(0,1)]),$$
	where the supremum is taken over all diffeomorphisms $\eta: B_{\mathbb{R}^k}(0,1)\to M$ s.t $\|\eta\|_{C^{1+\{r\}}}\leq 1$, where $\{r\}:=r-\lfloor r\rfloor$. Such disks are called {\em standard $k$-disks}.
\end{definition}

\begin{prop}\label{keyProp}
For every Pesin block $\Lambda^{(\underline\chi,\tau)}$ and a measurable set $A$ s.t $\mu(A\cap \Lambda^{(\underline\chi,\tau)})>0$, for $\mu$-a.e $x\in \Lambda ^{(\underline\chi,\tau)}\cap A $, $\exists \epsilon\in (0,\tau)$ s.t for all $n$ large enough, there exist at least $$e^{n(d_{i+1}-d_i)(\chi_i\beta_{i+1}-\Delta)-7d\tau n}\text{-many}$$ disjoint transverse balls $B^T(\cdot,e^{-n\chi_i\beta_{i+1}+3n\tau})$, s.t their intersection with $T_{n,\epsilon}(x)$ contains a $(\beta_{i+1}\chi_i,\frac{\beta_{i+1}}{1-\frac{\theta}{2}}, n,\epsilon)$-tube centered at a point in $\Lambda ^{(\underline\chi,\tau)}\cap A $.
\end{prop}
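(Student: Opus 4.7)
\emph{Proof proposal.} Write $\rho := \Delta(d_{i+1}-d_i) + \beta_{i+1}\chi_i d_i$, the tubular dimension of the big tube from Theorem~\ref{Grail5}. The plan is to compare the lower bound on $\mu_{\xi_{i+1}(x)}(T_{n,\epsilon}(x))$ against an upper bound on the $\mu_{\xi_{i+1}(x)}$-mass of a single slice of the big tube cut by a thin transverse ball, and then to extract a pairwise disjoint sub-packing.

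First, I choose $\epsilon\in(0,\tau)$ small and carve out a subset $K\subseteq A\cap \Lambda^{(\underline\chi,\tau)}$ with $\mu(K)>0$ on which the following estimates hold uniformly for every $n$ large enough: (a) $\mu_{\xi_{i+1}(y)}(T_{n,\epsilon}(y)) \geq e^{-n\rho-\tau n}$ by Theorem~\ref{Grail5}; (b) the $(\beta_{i+1}\chi_i,\beta_{i+1}/(1-\theta/2),n,\epsilon)$-sub-tube at $y$ is well-defined in the sense of Proposition~\ref{PreTAndT}; and (c) with $r_n:=e^{-n\chi_i\beta_{i+1}+3n\tau}$,
\begin{equation*}
\mu_{\xi_{i+1}(y)}\bigl(B^T(y,r_n)\cap\{z:|\pi_i(z)-\pi_i(y)|\leq e^{-n\chi_i\beta_{i+1}+\sqrt{\epsilon}n}\}\bigr) \leq e^{-n\chi_i\beta_{i+1}d_{i+1}+5d\tau n}.
\end{equation*}
The bound (c) can be obtained either by applying Theorem~\ref{Grail5} to an auxiliary tube with window $\chi_i\beta_{i+1}-3\tau$ and tubular scale $\beta_{i+1}-\tau/\chi_i$ (provided these parameters are admissible), or by a direct product-type decoupling multiplying the $\mu_{\xi_i}$-mass $\leq e^{-n\chi_i\beta_{i+1}d_i+\tau n}$ of a $\xi_i$-ball of radius $e^{-n\chi_i\beta_{i+1}}$ by the transverse mass $\leq e^{-(n\chi_i\beta_{i+1}-3n\tau)(d_{i+1}-d_i)+\tau n}$ of the transverse ball of radius $r_n$.

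Next I pick a $\mu_{\xi_{i+1}(x)}$-density point $x\in K$. Corollary~\ref{tDiff} yields $\mu_{\xi_{i+1}(x)}(T_{n,\epsilon}(x)\cap K)\geq \tfrac12 e^{-n\rho-\tau n}$ for large $n$, and Lemma~\ref{tCover} produces a cover of $T_{n,\epsilon}(x)\cap K$ by transverse balls $\{B^T(y_j,r_n)\}_j$ centered in $K$ with multiplicity bounded by $\widetilde C_d$. Each slice $B^T(y_j,r_n)\cap T_{n,\epsilon}(x)$ sits inside the set appearing in (c) at $y_j$, so dividing the total mass by the per-slice upper bound gives
\begin{equation*}
\#\{B^T(y_j,r_n)\}_j \geq \frac{\tfrac12\,e^{-n\rho-\tau n}}{\widetilde C_d\cdot e^{-n\chi_i\beta_{i+1}d_{i+1}+5d\tau n}} \geq e^{n(d_{i+1}-d_i)(\chi_i\beta_{i+1}-\Delta)-6d\tau n},
\end{equation*}
via the identity $\chi_i\beta_{i+1}d_{i+1}-\rho=(d_{i+1}-d_i)(\chi_i\beta_{i+1}-\Delta)$. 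A standard Vitali-type selection on the transverse metric $d^T_{i+1}$ (iteratively keep any remaining ball and discard those whose centers lie within $2r_n$; the number of discards per pick is bounded by a dimensional constant) extracts a pairwise disjoint subcollection of cardinality at least $e^{n(d_{i+1}-d_i)(\chi_i\beta_{i+1}-\Delta)-7d\tau n}$.

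Finally, for each retained $y_j\in K\subseteq A\cap \Lambda^{(\underline\chi,\tau)}$, the sub-tube at $y_j$ has transverse extent $e^{-n\chi_i\beta_{i+1}}<r_n$ and $\xi_i$-extent $e^{-n\chi_i\beta_{i+1}/(1-\theta/2)+\sqrt{\epsilon}n}$, which is dominated by the big tube's $\xi_i$-extent $e^{-n\chi_i\beta_{i+1}+\sqrt{\epsilon}n}$; both containments hold up to constant-multiplicative slack absorbed by the $\sqrt{\epsilon}$-margin when $\epsilon$ is chosen small. The main obstacle is step (c): rigorously justifying the uniform per-slice upper bound requires either verifying admissibility of the auxiliary tube parameters (using that the Hölder exponent $\theta$ is small relative to the spectral gap $(\chi_i-\chi_{i+1})/\chi_i$), or carrying the product-type decoupling through by hand, which is precisely where the Ledrappier--Young disintegration and the non-conformality of $Df|_{\xi_i}$ intervene in a nontrivial way.
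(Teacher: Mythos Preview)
Your overall strategy coincides with the paper's: lower-bound the tube mass via Theorem~\ref{Grail5}, upper-bound each transverse slice, divide to count, then thin to a disjoint subfamily. The one place you flag as ``the main obstacle'', step~(c), is in fact where the paper's argument is simplest, and you are overcomplicating it. The set in~(c), namely $B^T(y,r_n)\cap\{z:|\pi_i(z)-\pi_i(y)|\leq e^{-n\chi_i\beta_{i+1}+\sqrt{\epsilon}n}\}$, has \emph{both} its transverse extent and its $\pi_i$-extent of order $e^{-n\chi_i\beta_{i+1}}$ (up to $e^{O(\tau n)}$); since the chart $\mathcal{O}_x$ is bi-Lipschitz, this set is contained in an ordinary round ball $B_{\xi_{i+1}(x)}(y,e^{-n\chi_i\beta_{i+1}+O(\tau n)})$. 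The Ledrappier--Young exact dimensionality of $\mu_{\xi_{i+1}(x)}$ then gives the bound $e^{-n\chi_i\beta_{i+1}d_{i+1}+O(\tau n)}$ directly --- no auxiliary tube parameters, no product decoupling. This is exactly the paper's route: $K_\tau$ is defined by the single requirement $\mu_{\xi_{i+1}(y)}(B_{\xi_{i+1}(x)}(y,e^{-\chi_i\beta_{i+1}n}))\leq e^{-nd_{i+1}\chi_i\beta_{i+1}+\tau n}$, and each slice $T\cap B^T$ is observed to lie inside such a ball. Your Option~1 would eventually work but is circuitous; your Option~2 is essentially the local product structure and is strictly harder than what is needed.

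One further point you gloss over: to guarantee that the sub-tube centred at $y_j$ is genuinely contained in $T_{n,\epsilon}(x)\cap B^T$ (not merely of compatible size), the paper covers $K_\tau\cap T_{n(1+\sqrt{\epsilon}),\epsilon}(x)$ rather than $K_\tau\cap T_{n,\epsilon}(x)$. The extra shrinkage buys the room so that a sub-tube based at a point of the slightly smaller tube still fits inside the original one and inside the inflated transverse ball $B^T(x_B,e^{-n\chi_i\beta_{i+1}+3n\tau})$. Your claim that the containment is ``absorbed by the $\sqrt{\epsilon}$-margin'' is not correct as stated, since $y_j$ may lie on the boundary of $T_{n,\epsilon}(x)$.
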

\begin{proof}
Let $n_\tau\in\mathbb{N}$ s.t $\mu(K_\tau)\geq e^{-\frac{\tau}{2}}\mu(\Lambda ^{(\underline\chi,\tau)})\cap A$, where $$K_\tau:=\{x\in \Lambda ^{(\underline\chi,\tau)}\cap A:\forall n\geq n_\tau,\mu_{\xi_{i+1}(x)}(B_{\xi_{i+1}(x)}(x,e^{-\chi_i\beta_{i+1}n}))\leq e^{-nd_{i+1}\chi_i\beta+\tau n}\}.$$

By Lemma \ref{tDiff}, for $\mu$-a.e $x\in K_\tau$, there exist $\epsilon_x\in (0,\tau^2)$ s.t $\forall \epsilon\in (0,\epsilon_x]$ $\exists n_x(\epsilon)\in \mathbb{N}$ s.t $\forall n\geq n_x$, 
\begin{equation*}\label{keyOfKey}
	\frac{\mu_{\xi_{i+1}(x)}(T_{n(1+\sqrt\epsilon),\epsilon}(x)\cap K_\tau)}{\mu_{\xi_{i+1}(x)}(T_{n(1+\sqrt\epsilon),\epsilon}(x))}\geq e^{-\tau n}.
\end{equation*}
Let $\epsilon\in (0,\tau)$ and $n_\tau'\geq n_\tau$ s.t $\mu(A')\geq \mu(K_\tau)e^{-\frac{\tau}{2}}\geq \mu(A\cap \Lambda^{(\underline\chi,\tau)})e^{-\tau}$, where
\begin{equation}\label{keyOfKeyPrime}
A':=\{x\in K_\tau: \epsilon_x\geq \epsilon, n_x(\epsilon)\leq n_\tau\}.
\end{equation}

Let $x\in A'$. Cover $K_\tau\cap T_{n(1+\sqrt\epsilon),\epsilon}(x)$ with Besicovitch cover of transverse balls of the form $B^T(\cdot, e^{-n\chi_i\beta_{i+1}+n\tau})$, $\mathcal{B}^T$. 

Notice that for every $B^T\in\mathcal{B}^T$, if $y\in K_\tau\cap T_{n (1+\sqrt\epsilon),\epsilon}(x)\cap B^T$, then $ T_{n (1+\sqrt\epsilon),\epsilon}(x)\cap B^T \subseteq B_{\xi_{i+1}(x)}(y,e^{-\chi_i\beta_{i+1}n})$, and so $\mu_{\xi_{i+1}(x)}(T_{n (1+\sqrt\epsilon),\epsilon}(x)\cap B^T)\leq e^{-nd_i\chi_i\beta+\tau n} $. 

By Theorem \ref{Grail5}, we may also assume that $\epsilon>0$ is sufficiently small and $n$ is large enough so $\mu_{\xi_{i+1}(x)}(T_{n (1+\sqrt\epsilon),\epsilon}(x))\geq e^{-n\rho^T_{i+1}-n\tau}$. Then, since by Theorem \ref{Grail5} $\rho^T_{i+1}=n\beta_{i+1}\chi_id_i+\Delta(d_{i+1}-d_i)$, we have
$$\#\mathcal{B}^T\geq \frac{1}{C_d}\cdot\frac{e^{-n\rho^T_{i+1}-n\tau}\cdot e^{-n\tau}}{e^{-nd_{i+1}\chi_i\beta_{i+1}+\tau n}}\geq e^{n(d_{i+1}-d_i)(\chi_i\beta-\Delta)-4\tau n}.$$

Finally, notice that for all $y\in K_\tau\cap T_{n (1+\sqrt\epsilon),\epsilon}(x)\cap B^T$, for all $n$ large enough w.r.t $\Lambda^{(\underline\chi,\tau)}$, $$T_{\beta_{i+1}\chi_i,\frac{\beta_{i+1}}{1-\frac{\theta}{2}},n,\epsilon}(y)\subseteq T_{n,\epsilon}(x)\cap B^T(x_B, e^{-n\chi_i\beta_{i+1}+3n\tau}),$$ where $B^T=B^T(x_B, e^{-n\chi_i\beta_{i+1}+n\tau})$. In order to bound from below the number of disjoint elements $B^{T}(\cdot,e^{-n\chi_i\beta+3\tau n})$ which we can find in $\mathcal{B}^T$, we wish to bound their multiplicity in the cover $\mathcal{B}^T$. We refer to the computation in the tubular covers lemma, Lemma \ref{tCover}, to recall that the multiplicity for ``inflated" transverse balls is bounded by the volume estimates on the quotient space $\xi_{i+1}/\xi_i$, and so, $\forall B^T\in\mathcal{B}^T$,
$$\#\{(B^T)'\in\mathcal{B}^T:(e^{2\tau n} (B^T)')\cap (e^{2\tau n} B^T)\neq \varnothing \}\leq \widetilde{C}_d e^{3d\tau n}.$$
Thus in total, for all $n$ large enough, the statement follows. 
\end{proof}

\noindent\textbf{Remark:} Allow us to explain the idea behind the following theorem. Imagine that the distribution $E_{i+1}$ were Lipschitz continuous, and so $\theta=1$ (recall Definition \ref{tubes}). In this case, a tube could be of unbounded eccentricity, for example $B^T(x,e^{-\epsilon n})\cap \{y:|\pi_i(y)-\pi_i(x)|\leq e^{-n\chi_{i+1} }\}$, with a measure corresponding to its section by $\xi_i$-conditionals- $e^{-nd_i\chi_{i+1}+O(\epsilon n)}$. We could then divide the tube into $\sim e^{n(h_{i+1}-h_i)}$-many balls of radius $e^{-n\chi_{i+1}}$ which contain a good Pesin point, in the spirit of Proposition \ref{keyProp}. In that case, we can consider a disk which passes through the Bowen ball of the good point in each one of those balls, as they all lie in one tube we can guarantee that the disk does not ``wiggle" too much.

Starting with a tube which is long in the transverse metric and thicker than $e^{-n\chi_{i+1}}$ in the $\xi_i$-direction means that the disk which passes through the points may ``wiggle" much, as it may have to go up and down a distance larger than $e^{-n\chi_{i+1}}$ over a distance of order $e^{-n\chi_{i+1}}$.
Unfortunately, the distribution $E_{i+1}$ is merely H\"{o}lder continuous, and $\theta$ is bounded away from $1$.

The idea of the theorem below, is to use the fact that tubes have some non-zero exponential eccentricity, in order to intermediately bridge between their eccentricity and the desired eccentricity; while in each step controlling the ``wiggling" of the disk. This process relies heavily on the geometric measure theoretic properties of tubes, as a differentiation basis and their geometric description.

\begin{theorem}[Entropy gap]\label{entGapThm}
	Let $1<r\in (\mathbb{R}\setminus\mathbb{N})\cup \{\infty\}$, and assume $f\in \mathrm{Diff}^r(M)$. Let $\mu$ be an $f$-invariant ergodic measure with $u$ distinct positive Lyapunov exponents, and let $k_{i+1}$, $0\leq i\leq u-1$, be the dimension of the Oseledec subspace of the $i+1$-th largest exponent. Then,
	$$h_{i+1}-h_i\leq \mathcal{V}_{k_{i+1}}^r(f).$$
\end{theorem}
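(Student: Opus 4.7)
By the definition of $\mathcal{V}_{k_{i+1}}^r(f)$, it suffices to produce, for every $\tau>0$ and all sufficiently large $n$, a standard $k_{i+1}$-disk $\eta_n$ with $\|\eta_n\|_{C^{1+\{r\}}}\leq 1$ whose image $f^n\circ\eta_n[B_{\mathbb{R}^{k_{i+1}}}(0,1)]$ has $k_{i+1}$-volume at least $e^{n(h_{i+1}-h_i)-O(\tau n)}$. The plan is to embed $\eta_n$ inside a single unstable leaf through a typical Pesin-block point $x$, threading it through one Pesin point chosen from each member of a large nested family $\mathcal{F}$ of Bowen-scale sub-tubes produced by iterating Proposition \ref{keyProp}; Pesin expansion then converts the cardinality of $\mathcal{F}$ directly into $k_{i+1}$-volume of $f^n\circ\eta_n$.

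Fix a Pesin block $\Lambda:=\Lambda^{(\underline\chi,\tau)}$ and a typical $x\in\Lambda$, and start from a tube $T^{(0)}=T_{n,\epsilon}(x)$ of parameters $(\Delta_0,\beta_0)$ with $\Delta_0$ small inside the valid range of Definition \ref{tubes}. Proposition \ref{keyProp} yields at least $e^{n(d_{i+1}-d_i)(\chi_i\beta_0-\Delta_0)-O(\tau n)}$ disjoint sub-tubes $T^{(1)}$ of parameters $(\Delta_1,\beta_1)=(\chi_i\beta_0,\beta_0/(1-\theta/2))$, each centered at a Pesin point in $\Lambda$. Applying the proposition recursively inside each $T^{(1)}$, and so on for $N$ steps, produces a nested family $\mathcal{F}_N$ whose total cardinality telescopes via the relation $\Delta_{j+1}=\chi_i\beta_j$ to
\begin{equation*}
\#\mathcal{F}_N\;\geq\;\exp\Bigl(n(d_{i+1}-d_i)(\Delta_N-\Delta_0)-O(N\tau n)\Bigr).
\end{equation*}
The iteration may be continued as long as the validity $\chi_{i+1}\beta_j<\Delta_j<\chi_i\beta_j$ of Definition \ref{tubes} persists, which---under a mild quantitative constraint on the H\"older exponent $\theta$ relative to the spectral gap $\chi_i-\chi_{i+1}$---permits driving $\Delta_N$ up to the Bowen-ball transverse scale $\chi_{i+1}$ in finitely many steps. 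Invoking the Ledrappier--Young identity $d_{i+1}-d_i=(h_{i+1}-h_i)/\chi_{i+1}$ then gives
\begin{equation*}
\#\mathcal{F}_N\;\geq\; e^{\,n(h_{i+1}-h_i)(1-\Delta_0/\chi_{i+1})-O(N\tau n)},
\end{equation*}
which tends to $e^{n(h_{i+1}-h_i)-o(n)}$ upon sending $\Delta_0\to 0$ and $\tau\to 0$ along a suitable subsequence (balancing $N$ against $\tau$).

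Given $\mathcal{F}_N$, select a Pesin point $y_B$ from each $B\in\mathcal{F}_N$ and assemble a smooth $k_{i+1}$-disk $\eta_n$ threading $\{y_B:B\in\mathcal{F}_N\}$ in $W^{i+1}(x)$. The hierarchical nesting is essential here: two points lying in a common level-$j$ sub-tube differ in the transverse metric by at most $e^{-n\Delta_j}$ and in the $\xi_i$-direction by at most $e^{-n\chi_i\beta_j}$, while the tangent distribution $E_{i+1}$ tilts across them by at most $C(\Lambda)\,e^{-n\chi_i\beta_j\theta}$, by the H\"older regularity of the Oseledec distributions on $\Lambda$. Interpolating graphs at consecutive scales of the hierarchy can therefore be glued, via a partition-of-unity/bump-function construction, into an $\eta_n$ whose $C^{1+\{r\}}$ norm, after rescaling the domain to the unit ball, is uniformly bounded and may be arranged to be $\leq 1$. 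Finally, because each $y_B\in\Lambda$, the map $f^n$ expands by $e^{n\chi_{i+1}\pm\tau n}$ along $E_{i+1}$ in a small neighborhood of $y_B$, so the Bowen-scale neighborhoods of distinct points in $\mathcal{F}_N$ map under $f^n$ to pairwise separated sets each of individual $k_{i+1}$-volume bounded below by a $\Lambda$-dependent constant. This yields $\Vol_{k_{i+1}}(f^n\circ\eta_n[B_{\mathbb{R}^{k_{i+1}}}(0,1)])\geq c(\Lambda)\cdot\#\mathcal{F}_N$, and taking $\limsup_{n\to\infty}\tfrac{1}{n}\log$ concludes.

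The principal obstacle is the disk-threading step: reconciling the merely H\"older regularity of $E_{i+1}$ with the $C^{1+\{r\}}$ norm budget of standard disks. This is precisely the issue highlighted in the remark preceding the theorem---a one-shot interpolation from a thick tube all the way down to Bowen scale would force $\eta_n$ to wiggle too much---and the multi-scale iteration of Proposition \ref{keyProp} is exactly the device that distributes the necessary wiggling uniformly across scales, with the incremental change of eccentricity at each level matched to the H\"older budget $\theta$. The geometric-measure-theoretic tools developed in Section \ref{totallyTubular} (tubular covers, tubular differentiation, and the explicit description of tubes) are what make this scale-by-scale book-keeping quantitative.
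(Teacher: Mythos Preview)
Your proposal is correct and follows essentially the same approach as the paper: recursively apply Proposition \ref{keyProp} to build a nested family of sub-tubes with parameters $(\Delta_j,\beta^{(j)})$ growing geometrically until $\Delta_N\approx\chi_{i+1}$ (the paper's Steps 1--2 and 5), thread a $k_{i+1}$-disk through the hierarchy by bump-function patching at each scale (Steps 3--4), and convert the telescoped sub-tube count into volume growth of $f^n$ applied to that disk (Steps 6--7). Two small clarifications worth noting: in the paper $\theta$ is treated as a \emph{free} parameter chosen smaller than the true H\"older exponent of $E_{i+1}$ on Pesin blocks, so no ``quantitative constraint on $\theta$ relative to the spectral gap'' is actually needed---the argument first produces $h_{i+1}-h_i\leq\frac{1}{1-\theta/2}\mathcal{V}^r_{k_{i+1}}(f)$ and then sends $\theta\to0$; and the paper handles the case $i=0$ separately by a direct (non-recursive) Bowen-ball count inside $\xi_1(x)$.
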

\begin{proof}\text{ }

\underline{\textbf{The case of $i=0$:}}
In that case $h_0=0$, and we consider $\xi_1(x)$ of a $\mu$-typical point $x$. For every $\delta\in (0,\frac{1}{2})$, there exist $\epsilon\in (0,\delta^2)$, a Pesin block $\Lambda^{(\underline\chi,\tau)}$ with $\tau\in (0,\epsilon^2)$, an integer $n_0\in\mathbb{N}$, and a set $K_\delta$ s.t $\mu(K_\delta)\geq 1-\delta$ where
\begin{align*}
	K_\delta:=\{x\in\Lambda^{(\underline\chi,\tau)}:\forall n\geq n_0, \mu_{\xi_1(x)}(B_{\xi_1(x)}(x,e^{-(\chi_1+\epsilon)n}))\leq e^{-nh_1+\delta n}\}.
\end{align*}
We can assume w.l.o.g that $x$ is a $\mu_{\xi_1(x)}$-density point of $K_\delta$, and cover $B_{\xi_1(x)}(x,e^{-\epsilon n})\cap K_\delta$ with a Besicovitch cover by balls $B_{\xi_1(x)}(\cdot,e^{-(\chi_1+\epsilon)n})$. Note, by \cite[Theorem~6.1]{RuelleFoliations}, $W^1(x)$ is a standard $k_1$-disk. Assume further that $\mu_{\xi_1(x)}(B_{\xi_1(x)}(x,e^{-\epsilon n}))\geq e^{-nd\epsilon}$ for all $n\geq n_1\geq n_0$. Notice that for all $y\in K_\delta\cap \xi_1(x)$, $\Vol_{k_1}(f^n[B_{\xi_1(x)}(x,e^{-(\chi_1+\epsilon)n})])\geq e^{-2\delta n\cdot k_1} $ then
\begin{align*}
	\frac{1}{C_d}\frac{(1-\delta)\cdot e^{-n\epsilon d}}{e^{-nh_1+\delta n}}\cdot e^{-2\delta n k_1}\leq \Vol_{k+1}(f^n[B_{\xi_1(x)}(x,e^{-\epsilon n})]).
\end{align*}
	Therefore, since $\delta>0$ can be arbitrarily small and $0<\epsilon<\delta$, we get $h_1\leq \mathcal{V}_{k_1}^r(f)$.
	
\medskip
\underline{\textbf{The case of $i\geq 1$:}}

This case adds a significant difficulty, which is the lack of conformality for the action of $f$ in the $\xi_i$-direction- i.e different expanding factors. To address this, we use the geometric properties of tubes. The heart of the proof is to find a tube inside which we can control the number of smaller tubes around Pesin regular points, and continuing to go into smaller and smaller scale, in a way which allows us to find a standard $k_{i+1}$-disk with a bounded norm where we control its volume growth quantitatively. The proof follows a recursive argument.

\textbf{Step 1:} We define the following starting values:
\begin{enumerate}
\item Let $\theta>0$ smaller than the H\"older exponent of the $E_{i+1}(\cdot)$ on Pesin blocks of $\mu$ (see \cite{BrinHolderContFoliations}), and small enough so $(1-\theta)\chi_i>\chi_{i+1}$.
\item Let $\kappa>0$ small.
\item $\Delta_0:=(1-\frac{\theta}{2})\kappa$.
\item $\beta^{(0)}:=\frac{\Delta_0}{(1-\frac{\theta}{2})\chi_i}\in(\frac{\Delta_0}{\chi_i}, \frac{\Delta_0}{\chi_i(1-\theta)})$.
\end{enumerate}

Let $0<\tau\ll(\frac{\kappa^2}{7d})^3$ small, let $\Lambda^{(\underline\chi,\tau)}$ be a Pesin block with a positive measure.

\textbf{Step 2:} We define the recursive values: Let $T^j_{n,\epsilon}(x)$ be a $(\Delta_j,\beta^{(j)},n,\epsilon)$-tube, where 
\begin{enumerate}
	\item $\Delta_{j+1}:=\beta^{(j)}\chi_i$,
	\item $\beta^{(j)}:=\frac{\Delta_j}{(1-\frac{\theta}{2})\chi_i}$.
\end{enumerate}
Then it follows that for  all $ j\in\mathbb{N}$,
\begin{equation}\label{forFutRef}
	\Delta_{j}:=\frac{\kappa}{(1-\frac{\theta}{2})^{j-1}}\text{, }\beta^{(j)}:=\frac{\kappa}{(1-\frac{\theta}{2})^j\chi_i}.
\end{equation}
Set
\begin{equation}\label{theNvaluation}
 	N=N(\underline\chi,\theta,\kappa):=\min\{j\in\mathbb{N}: \Delta_j\geq \chi_{i+1}\}
 	.
\end{equation}

\textbf{Step 3:} Let $x\in \Lambda^{(\underline\chi,\tau)}=:A_0 $, $n_0\in\mathbb{N}$, and $\epsilon_0>0$ given by Proposition \ref{keyProp}. Define $$L_x^{(0)}:=\exp_{x}^{W^{i+1}(x)}[E_{i+1}(x)]\cap B^T(x,e^{-\Delta_0 n-\epsilon_0 n}).$$ By the construction of the tube $T_{n,\epsilon_0}^{0}(x)$, $L_x^{(0)}\subseteq T_{n,\epsilon_0}^{0}(x)$. By \cite[Theorem~6.1]{RuelleFoliations}, $W^{i+1}(x)$ is a $C^r$ manifold, and so is the exponential map of it, hence $L_x^{(0)}$ is a $C^r$ disk. A standard result regarding the regularity of local unstable leaves guarantees that 
\\$\sup_{y\in\xi_{i+1}(x)}\|\exp_y^{W^{i+1}(x)}\|_{C^{1+\{r\}}}<\infty$. 

\textbf{Step 4:} Let $1\leq j\leq N$, and $A_{j}$, and define $A_{j+1}:=(A_j)'$ (recall \eqref{keyOfKeyPrime}), $\epsilon_{j+1}\in (0,\epsilon_j)$, and $n_{j+1}\geq n_j$ given by Proposition \ref{keyProp}
. 



Assume that $L^{(j)}_x$ is a standard $k_{i+1}$-disk in $T^{j}_{n,\epsilon_j}(x)$. We continue to modify $L_x^{(j)}$ in patches, in a way which keeps its $C ^{1+\{r\}} $-norm small. Consider all disjoint transverse balls given by Proposition \ref{keyProp}, $\mathcal{B}^T$. By construction, each one contains a strictly smaller transverse ball with the same center, and radius $e^{-n\chi_i\beta^{(j)}+\tau n}$, s.t the small transverse ball contains $x\in A_j$. For each such transverse ball $B^T$, we get for all $n
\geq n_{j+1}$, $$A_j\ni y_B\in (e^{-2\tau n}B^T)\cap T_{n(1+\sqrt\epsilon_{j+1}),\epsilon_{j+1}}^{j}(x) \Subset B^T\cap T_{n,\epsilon_{j+1}}^{j}(x).$$

The patching is done over the segment $L_x^{(j)}\cap B^T$, where we modify the disk by a ``bump" so it contains $$\exp_{y_B}^{W^{i+1}(x)}[B_{E_{i+1}(y_B)}(0,e^{-\chi_i\beta^{(j)}n-\epsilon_{j+1} n})].$$ Note, $\chi_i\beta^{(j)} =\Delta_{j+1}$, and $\sphericalangle(E_{i+1}(y_B),E_{i+1}(x))\leq e^{-(\Delta_j+\sqrt\epsilon_{j+1}) n\theta}$. The dimensions of the sets $ (e^{-2\tau n}B^T)\cap T_{n(1+\sqrt\epsilon_{j+1}),\epsilon_{j+1}}^j $ and $ B^T\cap T_{n,\epsilon_{j+1}}^{j}(x) $ guarantee that this can be done without causing the $C^{1+\{r\}}$-norm of the disk inside the box to exceed $4  \sup_{y\in \xi_{i+1}(x)}\|\exp_y^{W^{i+1}(x)}\|_{C^{1+\{r\}}} $.\footnote{In each box $B^T\cap T_{n,\epsilon_{j+1}}^{j}(x)$, the modified $L_{y_B}^{(j)}$ can be thought of as a graph of a function over $L_x^{(j)}$, with derivative bounded by $2$; If $\eta:B_{\mathbb{R}^k}(0,1)\to M$ is the disk, we can write $\eta=\exp_{y_B}^{W^{i+1}(x)}\circ \widetilde{\eta}$,
then $\|\eta\|_{C^{1+\{r\}}}\leq \|d_\cdot\widetilde{\eta}\| _{\mathrm{Lip}}^{1+\{r\}} \cdot \|\exp_{y_B}^{W^{i+1}(x)}\|_{C^{1+\{r\}}}\leq 4  \|\exp_{y_B}^{W^{i+1}(x)}\|_{C^{1+\{r\}}}$.\label{footer}} Call the modified standard $k_{i+1}$-disk $L_{y_B}^{(j+1)}$.

\textbf{Step 5:} We continue this way to modify all $L_x^{(j)}$ over all segments $\{L_{y_B}^{(j+1)}\}_{B\in \mathcal{B}^T}$, and call the total modified disk $L_x^{(j+1)}$. By the main statement of Proposition \ref{keyProp}, at each step $j$, we divide each tube of order $j$, into at least $e^{n(h_{i+1}-h_i)(\frac{\chi_i\beta^{(j)}}{\chi_{i+1}}-\frac{\Delta_j}{\chi_{i+1}})-7d\tau n}$-many tubes of order $j+1$. Therefore, together with \eqref{forFutRef}, the total number of tubes of order $N$ for all $n\geq n_N$, for all $x\in A_N$, is at least
\begin{align}\label{totalNumber}
	\exp&\left(n(h_{i+1}-h_i)\sum_{j=0}^{N-1} (\frac{\chi_i}{\chi_{i+1}}\frac{\kappa}{(1-\frac{\theta}{2})^{j}\chi_i}-\frac{1}{\chi_{i+1}}\frac{\kappa}{(1-\frac{\theta}{2})^{j-1}})-7Nd\tau n\right)\nonumber\\
	=&	\exp\left(n(h_{i+1}-h_i) \frac{1}{\chi_{i+1}}(\frac{\kappa}{(1-\frac{\theta}{2})^{N-1}}-\kappa(1-\frac{\theta}{2}))-7Nd\tau n\right)\nonumber\\
	=& \exp\left(n(h_{i+1}-h_i)\frac{1}{\chi_{i+1}}(\Delta_N-\kappa(1-\frac{\theta}{2})) -7Nd\tau n\right)\nonumber\\
	\text{(}\because\text{\eqref{theNvaluation})}\geq& \exp\left(n(h_{i+1}-h_i) (1-\kappa\frac{1-\frac{\theta}{2}}{\chi_{i+1}}) -7Nd\tau n\right)\nonumber\\
	\geq &\exp\left(n(h_{i+1}-h_i)  -\sqrt\kappa n\right).
\end{align}
The last inequality holds for all $\tau>0$ small enough w.r.t $\kappa$, $\theta$, and $\chi_{i+1}$, which determine the value of $N$; and for all $\kappa>0$ small enough w.r.t $(h_j)_{j\leq u}$ and $\underline\chi$.

\textbf{Step 6:}
Also notice that for every tube $T^{N}_{n,\epsilon_N}$ (since $\Delta_{N}< \chi_{i+1}(1-\frac{\theta}{2})$), $$\Vol_{k_{i+1}}\Big(f^{n(1-\frac{\theta}{2})}\Big[L_x^{(N)}\cap T_{n,\epsilon_N}^{(N)}(y_B
)\Big]\Big)\geq e^{-4\tau n(1-\frac{\theta}{2})k_{i+1}}\geq e^{-4\tau nk_{i+1}},$$
thus, by \eqref{totalNumber},
\begin{equation}\label{VolOfBall}
	\Vol_{k_{i+1}}\left(f^{n(1-\frac{\theta}{2})}[L_x^{(N)}]\right)\geq e^{n(h_{i+1}-h_i) -\sqrt\kappa n}\cdot  e^{-4\tau nk_{i+1}} \geq e^{n(h_{i+1}-h_i)  -2\sqrt\kappa n}.
\end{equation}

\textbf{Step 7:} 
%
Finally, $ L_{x}^{(0)}\subseteq \exp_x^{W^{i+1}(x)}[B_{E_{i+1}(x)}(0,e^{-\Delta_0 n})]$, and let $\widetilde{\eta}_n: B_{E_{i+1}(x)}(0,e^{-\Delta_0 n}) \to L_x^{(N)}$ be the representing function of the modified disk $L_x^{(N)}$ over $B_{E_{i+1}(x)}(0,e^{-\Delta_0 n}) $. Let $b: B_{E_{i+1}(x)}(0,1) \to B_{E_{i+1}(x)}(0,e^{-\Delta_0 n}) $, $t\mapsto t e^{-\Delta_0n}$, and set $\eta_n:=\widetilde{\eta}_n\circ b: B_{E_{i+1}(x)}(0,1) \to L_x^{(N)}$, then $\|\eta\|_{C^{1+\{r\}}}\leq e^{-\Delta_0n}\cdot 4  \sup_{y\in \xi_{i+1}(x)}\|\exp_y^{W^{i+1}(x)}\|_{C^{1+\{r\}}} \leq 1$, for all $n$ large enough w.r.t $\kappa$.

Therefore, 
$$h_{i+1}-h_i  -2\sqrt\kappa\leq \frac{1}{1-\frac{\theta}{2}}\mathcal{V}^r_{k_{i+1}}(f).$$ 
Since $\kappa>0$ can be arbitrarily small, $$h_{i+1}-h_i\leq \frac{1}{1-\frac{\theta}{2}}\mathcal{V}^r_{k_{i+1}}(f).$$
Since $\theta>0$ can be arbitrarily small, we are done. 
\end{proof}

\noindent\textbf{Remark:} If $f\in \mathrm{Diff}^{2+\gamma}(M)$, then the unstable leaves $W^{i+1}$ are embedded $C^{2+\gamma}$ leaves, and so the exponential map $\exp_x^{W^{i+1}(x)}$ is $C^{2+\gamma}$. By the same computation in \ref{footer}, and Step $7$, the disk $L_x^{(N)}$ will in fact have a $C^2$-norm bounded by $1$.

\begin{lemma}\label{condEntVol}
	Assume that $f\in\mathrm{Diff}^{r+\gamma}(M)$, $r\in\mathbb{N}$ and $\gamma>0$, and let $\mu$ be an ergodic $f$-invariant Borel probability with $u$ distinct positive Lyapunov exponents. Then for all $i\leq u$,
	$$h_i\leq \mathcal{V}_{\ell_i}^{r+\gamma}(f).$$
\end{lemma}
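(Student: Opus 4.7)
The plan is to generalize the $i=0$ case of the proof of Theorem \ref{entGapThm}, replacing metric balls in $\xi_1(x)$ by Bowen balls in $\xi_i(x)$. The $i=0$ case works because $W^1$ has a single Lyapunov exponent $\chi_1$, so metric balls of radius $e^{-\chi_1 n}$ coincide with Bowen balls in $\xi_1$ up to subexponential errors; for $i\geq 2$ the action of $f$ on $W^i$ is non-conformal and the two notions genuinely differ, so I would use the Bowen balls directly. Here $\ell_i:=\sum_{j\leq i}k_j=\dim W^i$.

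Fix $\delta>0$ small. By the conditional Brin--Katok characterization (clause (1) of the theorem following Definition \ref{Defs0}), I select $\epsilon\in(0,\delta^2)$, a Pesin block $\Lambda^{(\underline\chi,\tau)}$ with $\tau<\epsilon^2$ of positive measure, an integer $n_0$, and a subset $K_\delta\subseteq\Lambda^{(\underline\chi,\tau)}$ with $\mu(K_\delta)\geq(1-\delta)\mu(\Lambda^{(\underline\chi,\tau)})$ on which $\mu_{\xi_i(y)}(B_{\xi_i(y)}(y,n,\epsilon))\leq e^{-nh_i+\delta n}$ for every $y\in K_\delta$ and every $n\geq n_0$. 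Taking $x\in K_\delta$ a $\mu_{\xi_i(x)}$-density point, I fix a small $r_0>0$ (independent of $n$) with $c_0:=\mu_{\xi_i(x)}(B_{\xi_i(x)}(x,r_0))>0$, and extract a maximal $(n,\epsilon)$-separated subset $S\subseteq K_\delta\cap B_{\xi_i(x)}(x,r_0)$. By maximality $K_\delta\cap B_{\xi_i(x)}(x,r_0)\subseteq\bigcup_{y\in S}B_{\xi_i(y)}(y,n,\epsilon)$, which together with density of $x$ yields $|S|\geq (1-\delta)c_0\,e^{nh_i-\delta n}$.

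The Bowen balls $\{B_{\xi_i(y)}(y,n,\epsilon/2)\}_{y\in S}$ are pairwise disjoint and their $f^n$-images are pairwise disjoint subsets of $f^n[B_{\xi_i(x)}(x,r_0+\epsilon)]$. Using the backward contraction of $f^{-1}$ on the unstable leaves $W^i$ of Pesin regular points (all exponents on $W^i$ are positive), the backward constraints defining the pushed-forward Bowen ball are automatic in a small leafwise ball about $f^n y$, giving $f^n[B_{\xi_i(y)}(y,n,\epsilon/2)]\supseteq B_{\xi_i(f^n y)}(f^n y,c_1)$ with $c_1>0$ depending only on $\epsilon$ and $\Lambda^{(\underline\chi,\tau)}$. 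Hence each image has $\ell_i$-volume at least a positive constant $c_2(\epsilon,\Lambda^{(\underline\chi,\tau)})$, and summing over $y\in S$ yields
$$\Vol_{\ell_i}\bigl(f^n[B_{\xi_i(x)}(x,r_0+\epsilon)]\bigr)\geq c_2\cdot|S|\geq (1-\delta)c_0\,c_2\,e^{nh_i-\delta n}.$$

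Finally, I realize $B_{\xi_i(x)}(x,r_0+\epsilon)$ as a standard $\ell_i$-disk: since $f\in\mathrm{Diff}^{r+\gamma}(M)$, the leaf $W^i(x)$ is a $C^{r+\gamma}$ embedded submanifold with uniformly $C^{r+\gamma}$-bounded geometry on $\Lambda^{(\underline\chi,\tau)}$ (cf.\ \cite{RuelleFoliations}), so its exponential chart, after rescaling by a small factor, gives $\eta:B_{\mathbb{R}^{\ell_i}}(0,1)\to M$ with $\|\eta\|_{C^{1+\{r+\gamma\}}}\leq 1$ and image covering $B_{\xi_i(x)}(x,r_0+\epsilon)$. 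Taking $\limsup\frac{1}{n}\log$ of the volume estimate and then $\delta\to 0$ gives $h_i\leq\mathcal{V}^{r+\gamma}_{\ell_i}(f)$. The main technical obstacle is the uniform positive lower bound on the pushed-forward Bowen-ball volume; it is this step that requires the backward-contraction estimate on Pesin blocks, and is why the Pesin block must be fixed before $\epsilon$ is chosen and the implicit constants $c_1,c_2$ must be extracted independently of $n$.
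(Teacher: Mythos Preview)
Your approach is essentially the paper's: both count leaf-wise Bowen balls via the conditional Brin--Katok estimate and lower-bound the $\ell_i$-volume of their $f^n$-images inside a standard disk in $W^i(x)$. The cosmetic differences (maximal $(n,\epsilon)$-separated set versus a Besicovitch cover; fixed radius $r_0$ versus the paper's shrinking $e^{-\epsilon n}$) do not matter.

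There is, however, a genuine inaccuracy in the step you yourself flag as the crux. The inclusion $f^n[B_{\xi_i(y)}(y,n,\epsilon/2)]\supseteq B_{\xi_i(f^n y)}(f^n y,c_1)$ with $c_1$ \emph{independent of $n$} does not follow from $y\in\Lambda^{(\underline\chi,\tau)}_\ell$. The backward-contraction estimate on $W^i$ needed here is governed by the Pesin parameters at $f^n(y)$, not at $y$; the tempered property only gives $q_\tau(f^n y)\geq q_\tau(y)e^{-n\tau}$, so both the size of the local chart at $f^n(y)$ and the constant in front of $e^{-k(\chi_u-\tau)}$ degrade by a factor $e^{Cn\tau}$. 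Fixing the Pesin block before choosing $\epsilon$ controls the constants at $y$, but not uniformly along the forward orbit. Consequently one only obtains $c_1=c_1(n)\gtrsim e^{-Cn\tau}$.

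This does not break the argument, because the resulting volume loss $c_1(n)^{\ell_i}\geq e^{-C\ell_i n\tau}$ is subexponential and, since $\tau<\epsilon^2<\delta^2$, is absorbed into the $e^{-\delta n}$ error. The paper makes exactly this correction implicitly: it works with Bowen balls of shrinking radius $e^{-\epsilon n}$ and covers by pullbacks $f^{-n}[B(f^n(\cdot),e^{-2\epsilon n})]$, so that the image balls have radius $e^{-2\epsilon n}$ (well inside the degraded chart at $f^n(y)$) and contribute volume $\geq e^{-2\epsilon n\ell_i}$. Either formulation yields $h_i\leq \mathcal{V}^{r+\gamma}_{\ell_i}(f)$ after sending $\delta\to 0$.
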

\begin{proof}
Let $i\leq u$, and let $\tau\in (0,\delta^2)$, $\epsilon\in (0,\delta^2)$, and $n_\delta$ s.t $e^{-n_\delta \epsilon^2}\ll \frac {1}{\ell_\delta}$ and  $\mu(K_\delta)>0$ where 
	\begin{align}\label{defOgKdelta}
		K_\delta:=\{x\in \Lambda^{(\underline\chi,\tau)}_\frac{1}{\ell_\delta}:\forall n\geq n_0, \mu_{\xi_i(x)}(B_{\xi_i(x)}(x,n,e^{-\epsilon n}))=e^{-nh_i\pm \delta n}  \},
	\end{align}
and $\Lambda^{(\underline\chi,\tau)}_\frac{1}{\ell_\delta}$ is a Pesin block.	
Let $x\in K_\delta$ which is a $\mu_{\xi_i(x)}$-Lebesgue density point of $K_\delta$. Assume further that for all $n\geq n_\delta'\geq n_\delta$, $\mu_{\xi_i(x)}(B_{\xi_i(x)}(x,e^{-\epsilon n}))\geq e^{n\epsilon (d+1)}$, and so for all $n$ large enough 
\begin{align}\label{muOfBall}
\mu_{\xi_i(x)}(K_\delta\cap B_{\xi_i(x)}(x,e^{-\epsilon n}))\geq e^{-\delta}e^{-n\epsilon (d+1)}.
\end{align}

Given $n\geq n_\delta'$, cover $K_\delta\cap B_{\xi_i(x)}(x,e^{-\epsilon n})$ by a cover $\mathcal{C}_n$ of sets of the form\\ $f^{-n}[B_{f^n[W_i(x)]}(f^n(\cdot),e^{-2\epsilon n})]$, with multiplicity bounded by a constant $C_k$ which depends only on the dimension of $M$. This is possible by the Besicovitch covering lemma.  

Note that $\forall B\in \mathcal{C}_n$, $B\subseteq B(y_B,n,e^{-\epsilon n})\cap \xi_i(x)$, where $y_B\in K_\delta$. Therefore for all $n$ large enough, together with \eqref{muOfBall} and \eqref{defOgKdelta} we get,
\begin{align*}
	\#\mathcal{C}_n\geq \frac{e^{-\delta}e^{-n\epsilon (d+1)}}{e^{-nh_i+n\delta}}\geq e^{nh_i-2\delta n}.
\end{align*}

It follows that for all $n$ large enough,
\begin{align*}
	\mathrm{Vol}_{\ell_i}(f^n[B_{\xi_i(x)}(x,e^{-\epsilon n})])\geq \frac{1}{C_k}\cdot e^{nh_i-2\delta n}\cdot \frac{1}{2} e^{-2\epsilon n k}\geq e^{nh_i-3\delta n}.
\end{align*}	
By \cite{RuelleFoliations}, $W_i(x)$ is $C^{r+\gamma}$-manifold, therefore in total,
	$$h_i-3\delta\leq  \mathcal{V}_{\ell_i}^{r+\gamma}(f).$$
Since $\delta>0$ was arbitrary, the statement follows.
\end{proof}

\begin{cor}\label{fin}
Let $f\in \mathrm{Diff}^{\infty}(M)$ and let $\mu$ be an ergodic measure of maximal entropy with $u$ distinct positive Lyapunov exponents, $\chi_1>\ldots >\chi_u>0$. Let $\ell_i$ be the dimension of the Oseledec subspace corresponding to $\chi_1,\ldots,\chi_i$, and assume that for all $0\leq i\leq u-1$, for $\gamma>0$, $$\mathcal{V}^{1+\gamma}_{\ell_{i+1}-\ell_i}(f)\leq \mathcal{V}^{1+\gamma}_{\ell_{i+1}}(f) - \mathcal{V}^{1+\gamma}_{\ell_{i}}(f).$$
Then, for all $i\leq u$, $$h_i= \mathcal{V}^{1+\gamma}_{\ell_{i}}(f).$$
\end{cor}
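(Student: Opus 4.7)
The plan is to combine the entropy gap bound of Theorem \ref{entGapThm} with the super-additivity hypothesis and Newhouse's volume growth inequality, and then exploit the fact that $\mu$ is a measure of maximal entropy to pin down equality at the top of the chain and propagate it downwards.

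First, since $\ell_{i+1}-\ell_i$ equals $k_{i+1}$, the dimension of the $(i+1)$-st Oseledec subspace appearing in Theorem \ref{entGapThm}, I specialize that theorem to $r = 1+\gamma$ to get $h_{i+1}-h_i \leq \mathcal{V}^{1+\gamma}_{\ell_{i+1}-\ell_i}(f)$. Chaining this with the super-additivity hypothesis yields the per-step bound
\[
h_{i+1}-h_i \;\leq\; \mathcal{V}^{1+\gamma}_{\ell_{i+1}}(f)-\mathcal{V}^{1+\gamma}_{\ell_i}(f) \qquad (0\leq i\leq u-1),
\]
and telescoping from $i=0$, under the natural conventions $h_0 = 0$ and $\mathcal{V}^{1+\gamma}_{\ell_0}(f) = \mathcal{V}^{1+\gamma}_0(f) = 0$, produces the upper bound $h_j \leq \mathcal{V}^{1+\gamma}_{\ell_j}(f)$ for every $1\leq j\leq u$.

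For the matching lower bound at the top level I invoke the measure of maximal entropy property: Ledrappier-Young gives $h_u = h_\mu(f)$, the maximality of $\mu$ gives $h_\mu(f) = h_{\mathrm{top}}(f)$, and Newhouse's inequality \cite{NewhouseVolGrowth} -- valid in $\mathrm{Diff}^{1+\gamma}(M)$ -- asserts $h_{\mathrm{top}}(f) \geq \mathcal{V}^{1+\gamma}_k(f)$ for every integer $k$. Taking $k = \ell_u$ gives $h_u \geq \mathcal{V}^{1+\gamma}_{\ell_u}(f)$, which together with the upper bound of the previous paragraph forces $h_u = \mathcal{V}^{1+\gamma}_{\ell_u}(f)$, so the telescoping sum saturates. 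Since each summand $h_{i+1}-h_i$ is bounded by the corresponding increment $\mathcal{V}^{1+\gamma}_{\ell_{i+1}}(f)-\mathcal{V}^{1+\gamma}_{\ell_i}(f)$ and the two totals coincide, every individual inequality must be an equality, and inducting from $h_0 = 0$ yields $h_i = \mathcal{V}^{1+\gamma}_{\ell_i}(f)$ for every $1\leq i\leq u$.

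The conceptual heart has already been invested in Theorem \ref{entGapThm}; what remains is a telescoping identity paired with a saturation principle. The only place demanding care is quoting Newhouse's inequality in exactly the $C^{1+\gamma}$ regime with the normalization of Definition \ref{UniVolGrow}; should \cite{NewhouseVolGrowth} supply the statement only at higher regularity, the monotonicity $\mathcal{V}^{r'}_k(f) \geq \mathcal{V}^{r}_k(f)$ for $r'\leq r$ (weaker $C^{1+\{r'\}}$ norm constraints admit more test disks) reduces the $C^{1+\gamma}$ case to the $C^\infty$ case, where Yomdin's equality $h_{\mathrm{top}}(f) = \max_k \mathcal{V}^\infty_k(f)$ suffices since $f \in \mathrm{Diff}^\infty(M)$.
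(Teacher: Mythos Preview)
Your argument is correct and follows the same overall strategy as the paper: combine Theorem~\ref{entGapThm} with the super-additivity assumption to control the entropy gaps, use Ledrappier--Young plus maximality of $\mu$ together with the Newhouse/Yomdin volume-growth inequality to anchor $h_u \geq \mathcal{V}^{1+\gamma}_{\ell_u}(f)$, and then conclude by a telescoping/saturation argument.

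There is one structural difference worth flagging. For the upper bound $h_i \leq \mathcal{V}^{1+\gamma}_{\ell_i}(f)$ the paper invokes Lemma~\ref{condEntVol}, an independent Bowen-ball covering estimate inside $\xi_i$-leaves. You instead obtain this upper bound directly by telescoping the gap inequalities from $h_0=0$ and $\mathcal{V}^{1+\gamma}_{\ell_0}(f)=0$, and then get the matching lower bound for every $i$ from the single anchoring inequality at $i=u$ via saturation. This makes your proof slightly more self-contained: it never appeals to Lemma~\ref{condEntVol}, so the corollary becomes a pure consequence of Theorem~\ref{entGapThm} and the Newhouse inequality. Conversely, the paper's route has the advantage that the upper bound $h_i \leq \mathcal{V}^{1+\gamma}_{\ell_i}(f)$ is established for \emph{any} ergodic measure, not just the MME, which is of independent interest (see the second remark after Corollary~\ref{fin}). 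Your closing paragraph on the regularity issue for Newhouse's inequality is a reasonable caution, though in the paper's setting $f\in\mathrm{Diff}^\infty(M)$ the point is moot.
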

\begin{proof}
By \cite{Yomdin}, $h_\mathrm{top}(f)\geq \mathcal{V}^{1+\gamma}_u(f)$. By \cite{LedrappierYoungI}, $h_u=h_\mu(f)$. Therefore, since $\mu$ is a measure of maximal entropy,
\begin{enumerate}
\item $h_u\geq \mathcal{V}^{1+\gamma}_u(f)$.
\end{enumerate}
By Theorem \ref{entGapThm} and by our assumption, for all $i\leq u-1$,
\begin{enumerate}
\item[(2)] $h_{i+1}-h_i\leq \mathcal{V}^{1+\gamma}_{\ell_{i+1}-\ell_i}(f)\leq \mathcal{V}^{1+\gamma}_{\ell_{i+1}}(f) - \mathcal{V}^{1+\gamma}_{\ell_{i}}(f) $.
\end{enumerate}
Therefore, combining $(1)$ and $(2)$, we get that
$$h_{u-1}=h_{u}-(h_{u}-h_{u-1})\geq \mathcal{V}^{1+\gamma}_u(f)-(\mathcal{V}^{1+\gamma}_{\ell_{u}}(f) - \mathcal{V}^{1+\gamma}_{\ell_{u-1}}(f))= \mathcal{V}^{1+\gamma}_{\ell_{u-1}}(f).$$
Continuing by induction, we get for all $i\leq u$,
\begin{enumerate}
\item[(3)] $h_i\geq \mathcal{V}^{1+\gamma}_{\ell_{i}}(f) $.
\end{enumerate}
However, by Lemma \ref{condEntVol}, for all $i\leq u$,
\begin{enumerate}
\item[(4)] $h_i\leq \mathcal{V}^{1+\gamma}_{\ell_{i}}(f) $.
\end{enumerate}	
Thus, putting together (3) and (4), we conclude $$\forall i\leq u\text{, } h_i=\mathcal{V}^{1+\gamma}_{\ell_{i}}(f).$$
\end{proof}

\noindent\textbf{Remarks:}
\begin{enumerate}
\item In a follow up paper, we show that for holomorphic endomorphisms of $\mathbb{C}\mathbb{P}^k$, $k\geq 1$, the assumptions of Corollary \ref{fin} hold, and as a consequence we provide a formula for the Hausdorff dimension of the measure of maximal entropy. This gives an answer to a question of Forn\ae ss and Sibony in their list of fundamental open problems in higher dimensional complex analysis and complex dynamics (\cite[Question~2.17]{FS01}), and proves the Binder-DeMarco conjecture (\cite[Conjecture~1.3]{BinderDeMarcoConj}).
\item It follows that under the assumptions of Corollary \ref{fin}, the measure of maximal entropy not only maximizes the total entropy over all invariant measures, but also maximizes each separate conditional entropy over all invariant measures with the same index (recall Lemma \ref{condEntVol}), maximizes each entropy gap over all invariant measures with the same index (recall Theorem \ref{entGapThm}), and moreover it attains the maximal possible bounds given by the volume growth on the manifold.  
\end{enumerate}

\section*{Acknowledgements}
I would like to thank the Eberly College of Science in the Pennsylvania State University for excellent working conditions. I wish to deeply thank Prof. Federico Rodriguez-Hertz for many enlightening and useful discussions, from which I have learned a lot. 

\bibliographystyle{alpha}
\bibliography{Elphi}

\Addresses

\end{document}